\documentclass[11pt,reqno]{amsart}
\usepackage{amsthm, amsfonts, amssymb, color}
 \usepackage{mathrsfs}
\usepackage{enumerate}
\usepackage{amsmath}
 \usepackage{amstext, amsxtra}
  \usepackage{txfonts}
 \usepackage[colorlinks, linkcolor=black, citecolor=blue, pagebackref, hypertexnames=false]{hyperref}
\usepackage{graphicx}

 \allowdisplaybreaks
\setlength\textheight{44cc} \setlength\textwidth{30cc}
\setlength\topmargin{0in} \setlength\parskip{5pt}

\widowpenalty=10000

\newtheorem{theorem}{Theorem}[section]
\newtheorem{proposition}[theorem]{Proposition}
\newtheorem{corollary}[theorem]{Corollary}

\newtheorem{definition}[theorem]{Definition}

\newtheorem{remark}[theorem]{Remark}

\renewcommand\Re{\operatorname{Re}}
\renewcommand\Im{\operatorname{Im}}

\numberwithin{equation}{section}
\theoremstyle{definition}

\title
 [Limiting absorption principle and spectral multiplier theorems]
{Remarks on $L^p$-limiting absorption principle of Schr\"{o}dinger operators and applications to spectral multiplier theorems}
\author{Shanlin Huang, Xiaohua Yao, and Quan Zheng}

\address{ Shanlin Huang,  School of Mathematics and Statistics, Huazhong University of Science and Technology,  Wuhan,  430074, P.R.China  }
\email{shanlin\_huang@hust.edu.cn}
\address{Xiaohua Yao, Hubei Key Laboratory of Mathematical Sciences and School of Mathematics and Statistics,
 Central China Normal University, Wuhan, 430079, P.R. China}
\email{yaoxiaohua@mail.ccnu.edu.cn }
\address{ Quan Zheng,  School of Mathematics and Statistics, Huazhong University of Science and Technology,  Wuhan,  430074, P.R.China  }
\email{qzheng@hust.edu.cn}
\subjclass[2000]{ 42B20, 42B37, 35G05.}
\keywords{Limiting absorption principle, Uniform resolvent estimates, Spectral multipliers theorems.}

\begin{document}

\maketitle
\begin{abstract} This paper comprises two parts. We first investigate a $L^p$ type of limiting absorption principle for Schr\"{o}dinger operators $H=-\Delta+V$ on $\mathbb{R}^n$ ($n\ge 3$), i.e., we prove the  $\epsilon-$uniform $L^{\frac{2(n+1)}{n+3}}$-$L^{\frac{2(n+1)}{n-1}}$ estimates of the resolvent $(H-\lambda\pm i\epsilon)^{-1}$ for all $\lambda>0$  under the assumptions that the potential $V$ belongs to some integrable spaces and a spectral condition of $H$ at zero is satisfied. As applications, we establish a sharp  H\"ormander type spectral multiplier theorem associated with Schr\"{o}dinger operators $H$ and deduce $L^p$ bounds of the corresponding Bochner-Riesz operators.  Next, we consider the fractional Schr\"{o}dinger operator $H=(-\Delta)^{\alpha}+V$ ($0<2\alpha<n$) and prove a uniform Hardy-Littlewood-Sobolev inequality for $(-\Delta)^{\alpha}$, which generalizes the corresponding result of Kenig-Ruiz-Sogge \cite{KRS}.
\end{abstract}

\section{Introduction and main results}
\vskip0.3cm
In this paper, we investigate $L^p$-estimates for resolvents of Schr\"{o}dinger operators $-\Delta+V$ and devote their applications to related spectral multiplier operators. Besides, we also study similar estimates for generalized Schr\"{o}dinger operators $(-\Delta)^{\alpha}+V$ ($0<2\alpha< n$). Firstly, let us recall that in a paper of Kenig, Ruiz and Sogge \cite{KRS}, it was shown that for $n\ge 3$, there is a constant $C_p$ independent of $\lambda$ such that
\begin{align}\label{1.1}
\sup_{0<\epsilon<1}\|(-\Delta-(\lambda+i\epsilon))^{-1}\|_{L^{p}-L^{p'}}\leq C_p\lambda^{\frac{n}{2}(\frac 1p-\frac1{p'})-1}, ~~~\lambda>0,
\end{align}
where $\frac{2n}{n+2}\leq p\leq \frac {2(n+1)}{n+3}$ and $\frac{1}{p}+\frac{1}{p'}=1$. Moreover, we mention another basic resolvent estimate  for $H=-\Delta+V$ due to Agmon \cite{A}, known as {\it the limiting absorption principle},
\begin{align}\label{1.3}
\sup_{0<\epsilon<1}\|(-\Delta+V-(\lambda+i\epsilon))^{-1}\|_{L^{2, \sigma}-L^{2, -\sigma}}\leq C(\lambda_0)\lambda^{-1}, ~~~\lambda>\lambda_0>0,
\end{align}
where  $|V(x)|\leq C(1+|x|)^{-1-}$ and $\sigma>\frac12$ ( $L^{2, \sigma}$ is the usual weighted $L^2$ space ).  Motivated by \eqref{1.1} and \eqref{1.3}, Goldberg and Schlag \cite{GS} showed a $L^p$ version of the limiting absorption principle for the three dimensional Schr\"{o}dinger operators. More specifically, for any given $\lambda_0>0$, they proved
\begin{align}\label{1.4}
\sup_{0<\epsilon<1}\|(-\Delta+V-(\lambda+i\epsilon))^{-1}\|_{L^{\frac43}(\mathbb{R}^3)-L^4(\mathbb{R}^3)}\leq C(\lambda_0)\lambda^{-\frac14}, ~~~\lambda>\lambda_0,
\end{align}
where  $V\in L^p(\mathbb{R}^3) \cap L^{\frac32}(\mathbb{R}^3)$, $p>\frac32$.

In the first part, we will address the problem that whether one can take $\lambda_0=0$ in \eqref{1.4}. This is natural when comparing \eqref{1.4} to the free case \eqref{1.1}, and it's also inspired by spectral multiplier applications (see Section 2.2 below). To this end, we mention that as far as  dispersive estimates for Schr\"{o}dinger equations are concerned, zero is often assumed to be neither a eigenvalue nor a resonance in order to obtain sharp decay estimates (see e.g. \cite{JK, JSS, RS04,G}). In our $L^p$ case, we need to introduce a similar condition as well.
\begin{definition}\label{def1.1}
{\it We say that zero is regular with respect to $H=-\Delta+V$ in $L^p(\mathbb{R}^n)$ with some $p\ge 1$, if for any $u\in L^p(\mathbb{R}^n)$ which satisfies $u=-(-\Delta)^{-1}Vu$, then $u\equiv 0$.}
\end{definition}
Equivalently, if zero is a regular point of $H=-\Delta+V$, then the equation $-\Delta u+V u=0$ only  has the trivial solution $u=0$ in $L^p$ space.
Under suitable size conditions on potential $V$, we can prove that zero is always regular if $V\ge 0$ (see section 2.1 below).  On the other hand,  counterexamples can be constructed when $V< 0$. In fact, let $u(x)=(1+|x|^2)^{-\frac{n-2}{2}}$, then it's easy to check that $u\in L^\frac{2(n+1)}{n-1}(\mathbb{R}^n)$. A direct computation shows
$$
\frac{\Delta u}{u}=-\frac{n(n-2)}{(1+|x|^2)^2}<0.
$$
If we put $V= \frac{\Delta u}{u}$, then $V\in L^p(\mathbb{R}^n)$ for all $p\ge \frac {n}{2}$ and $u$ satisfies that $u=-(-\Delta)^{-1}V u$.

 Under this assumption for general potential $V$, we can state our first main result.
\begin{theorem}\label{thm1.2}
Let $n\ge 3$ and $V\in L^{\frac{n}{2}+\sigma} \cap L^{\frac{n}{2}}$, $\sigma>0$ be real-valued. If zero is regular with respect to $H=-\Delta+V$ in $L^\frac{2(n+1)}{n-1}$, then
\begin{align}\label{1.6}
\sup_{0<\epsilon<1}\|(-\Delta+V-(\lambda+i\epsilon))^{-1}\|_{L^{\frac{2(n+1)}{n+3}}-L^\frac{2(n+1)}{n-1}}\leq C\lambda^{-\frac{1}{n+1}}, ~~~ \lambda>0.
\end{align}
\end{theorem}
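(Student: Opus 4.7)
The strategy is to use the Lippmann--Schwinger identity $(I + R_0(z) V)\,R(z) = R_0(z)$, where $R_0(z) = (-\Delta - z)^{-1}$, $R(z) = (H - z)^{-1}$, and $z = \lambda + i\epsilon$. Combined with the Kenig--Ruiz--Sogge estimate \eqref{1.1}, the theorem reduces to showing $\sup_{\lambda>0,\,0<\epsilon<1}\|(I + R_0(z) V)^{-1}\|_{L^{p'}\to L^{p'}} < \infty$, where $p = \tfrac{2(n+1)}{n+3}$ and $p' = \tfrac{2(n+1)}{n-1}$; I will establish this via a Fredholm-theoretic argument.

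The central a priori estimate is that $R_0(z) V$ is bounded on $L^{p'}$ with norm $\leq C\|V\|_{L^{n/2}}$, uniformly in $z$. The plan is to factor through an intermediate space $L^s$ with $\tfrac1s = \tfrac1{p'} + \tfrac2n$: H\"older gives $V : L^{p'} \to L^s$ with constant $\|V\|_{L^{n/2}}$, while the uniform Sobolev estimate of Kenig--Ruiz--Sogge at the HLS-scaling pair $(s, p')$ provides $R_0(z) : L^s \to L^{p'}$ with a $\lambda$-independent constant. A parallel factorization through $L^{s_\sigma}$ with $\tfrac1{s_\sigma} = \tfrac1{p'} + \tfrac2{n+2\sigma}$ (using $V \in L^{n/2+\sigma}$, supplemented by a variant via $V \in L^{(n+1)/2}$ obtained by interpolation in the case $\sigma \geq 1/2$) then yields the high-energy decay $\|R_0(z)V\|_{L^{p'}\to L^{p'}} \leq C\lambda^{-\delta}$ for some $\delta = \delta(\sigma,n) > 0$.

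Next I will prove compactness of $R_0(z) V$ on $L^{p'}$ by approximating $V$ in $L^{n/2}\cap L^{n/2+\sigma}$ by compactly supported smooth $V_k$: for each $V_k$ the operator $R_0(z) V_k$ is compact, via Rellich--Kondrachov on the support of $V_k$ combined with kernel decay of $R_0(z)$ off this support, and the a priori bound transfers compactness to the limit. By the Fredholm alternative, uniform invertibility of $I + R_0(z) V$ then reduces to pointwise injectivity. For $\epsilon > 0$ this is automatic from self-adjointness of $H$. For $\epsilon = 0$ and $\lambda > 0$, any nontrivial $u \in L^{p'}$ in the kernel is an outgoing generalized eigenfunction at positive energy; a Kato--Agmon-type bootstrap, using the outgoing boundary condition together with $V \in L^{n/2}\cap L^{n/2+\sigma}$, promotes $u$ to $L^2$, and the absence of positive embedded eigenvalues for such $V$ then forces $u \equiv 0$. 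For $\lambda = 0$, injectivity is exactly the hypothesis that zero is regular for $H$ in $L^{p'}$.

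The uniform bound on $(I + R_0(z) V)^{-1}$ then follows by combining (a) Neumann series inversion for $\lambda \geq \Lambda$ with $\Lambda$ sufficiently large, thanks to the decay $\lambda^{-\delta}$; and (b) norm-continuity of $z \mapsto R_0(z) V$ on the compact set $\{0 \leq \lambda \leq \Lambda,\, 0 \leq \epsilon \leq 1\}$, extending down to $z = 0$ because the factorization through $L^s$ is stable at the origin by the classical HLS inequality. This continuity, combined with the pointwise invertibility established above, produces a uniform bound on the complementary compact set. The main obstacle I foresee is the injectivity step at positive energies: lifting an $L^{p'}$ generalized eigenfunction to an $L^2$ eigenfunction requires a delicate self-improving regularity argument that exploits both the outgoing character of $R_0(\lambda+i0)$ and the integrability of $V$; by contrast, the combination of Fredholm theory with the zero-regularity hypothesis is the standard mechanism.
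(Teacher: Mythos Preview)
Your proposal is correct and follows essentially the same route as the paper: resolvent identity, compactness and Fredholm invertibility of $I + R_0(z)V$ on $L^{p'}$ (with the zero-regularity hypothesis handling $z=0$ and absence of embedded eigenvalues handling $z=\lambda+i0$), norm-continuity of $z\mapsto R_0(z)V$ down to the origin, and Neumann series at high energy via $V\in L^{n/2+\sigma}$. The only difference is expository: the paper cites \cite{GS} for compactness and positive-energy injectivity and writes out explicit kernel estimates for the continuity step, whereas you sketch the former more fully and abbreviate the latter.
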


Theorem \ref{thm1.2}  extends estimates \eqref{1.4} of Goldberg and Schlag \cite{GS} to the limiting case $\lambda_0=0$ and also deals with the higher dimension case. The potential class is almost critical and it relies heavily on results of absence of imbedded eigenvalues of $-\Delta+V$ due to Ionescu and Jerison \cite{IJ}. We also mention that when $V(x)=c/{|x|^2}$, where $c \ge -\frac{(n-2)^2}{4}$, which denotes the best constant in Hardy inequality, the uniform resolvent estimates of the form $L^{p,2}- L^{p', 2}$ in Lorentz space were obtained very recently by Mizutani \cite{Mi}.

As mentioned above, one motivation behind proving uniform resolvent estimates \eqref{1.6} is that they are closely related to the theory of spectral multipliers. Actualy, note that \eqref{1.1} implies the following sharp estimates of the spectral measure associated with the Laplace operator
\begin{align}\label{1.41}
\|dE_{-\Delta}(\lambda)\|_{L^{\frac{2(n+1)}{n+3}}(\mathbb{R}^n)-L^{\frac{2(n+1)}{n-1}}(\mathbb{R}^n)}\leq C\lambda^{-\frac{1}{n+1}},~~~\lambda>0,
\end{align}
 by the Stone formula  $$
 dE_{-\Delta}(\lambda)=\frac{1}{2\pi i}\Big((-\Delta-(\lambda+i0))^{-1}-(-\Delta-(\lambda-i0))^{-1}\Big),\ \ \lambda>0.
 $$For the perturbed case $H=-\Delta+V$, similar results can also be established for $H$ via Theorem \ref{thm1.2}. In particular, we shall show the following result.
\begin{theorem}\label{thm1.4}
Let $n\ge 3$, $H=-\Delta+V$  and $0\leq V\in L^{\frac n2}\cap L^{\frac{n}{2}+\sigma}$ for some $\sigma>0$. Then
\begin{align}\label{1.7}
\|dE_{H}(\lambda)\|_{L^{p}-L^{p'}}\leq C\lambda^{\frac{n}{2}(\frac 1p-\frac 1{p'})-1},~~~\lambda>0,
\end{align}
for all $1\leq p\leq \frac{2(n+1)}{n+3}$.
\end{theorem}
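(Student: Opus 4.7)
The strategy is to interpolate between two endpoint estimates for the spectral measure $dE_H(\lambda)$: the sharp restriction bound at $p_0=\frac{2(n+1)}{n+3}$ obtained from Theorem \ref{thm1.2} via Stone's formula, and the $L^1$-$L^\infty$ bound at the trivial endpoint furnished by the Gaussian heat-kernel upper bound that $V\geq 0$ provides.

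For the endpoint $p=p_0$, I would first note that since $V\geq 0$ and $V\in L^{n/2}\cap L^{n/2+\sigma}$, the argument sketched in Section 2.1 ensures that zero is regular with respect to $H$ in $L^{\frac{2(n+1)}{n-1}}$, so Theorem \ref{thm1.2} applies. Stone's formula
\[
dE_H(\lambda)=\frac{1}{2\pi i}\big((H-(\lambda+i0))^{-1}-(H-(\lambda-i0))^{-1}\big)
\]
combined with the uniform resolvent bound then yields $\|dE_H(\lambda)\|_{L^{p_0}\to L^{p_0'}}\leq C\lambda^{-1/(n+1)}$ uniformly in $\lambda>0$. For the endpoint $p=1$, Trotter--Kato domination from $V\geq 0$ gives the pointwise comparison $0\leq p_t^H(x,y)\leq p_t^{-\Delta}(x,y)\leq Ct^{-n/2}$, so $\|e^{-tH}\|_{L^1\to L^\infty}\leq Ct^{-n/2}$. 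Coupled with the operator-positivity of $dE_H(\lambda)$ and the elementary majorization $\chi_{[\lambda,2\lambda]}(\mu)\leq e^{2} e^{-\mu/\lambda}$, this produces the dyadic projector bound $\|E_H([\lambda,2\lambda])\|_{L^1\to L^\infty}\leq C\lambda^{n/2}$, from which the pointwise density bound $\|dE_H(\lambda)\|_{L^1\to L^\infty}\leq C\lambda^{(n-2)/2}$ is extracted using the regularity of $dE_H(\lambda)$ in $\lambda$ provided by Theorem \ref{thm1.2}.

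Finally, Riesz--Thorin interpolation between the two endpoints $(L^1,L^\infty)$ and $(L^{p_0},L^{p_0'})$ along the conjugate line $\frac{1}{p}+\frac{1}{p'}=1$ yields $\|dE_H(\lambda)\|_{L^p\to L^{p'}}\leq C\lambda^{\frac{n}{2}(\frac{1}{p}-\frac{1}{p'})-1}$ for every $1\leq p\leq p_0$, which is the statement of Theorem \ref{thm1.4}; a direct calculation verifies that the interpolated exponent of $\lambda$ matches precisely the one asserted. The main technical obstacle is the passage from the dyadic projector bound to the pointwise-in-$\lambda$ density bound in the $p=1$ step: the integrated version over $[\lambda,2\lambda]$ is immediate from heat-kernel comparison, but converting it into a bound for the density at each $\lambda$ requires exploiting the $L^{p_0}$-$L^{p_0'}$ continuity of the spectral measure coming from the first part of the paper.
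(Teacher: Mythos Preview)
Your Step~1 at $p=p_0=\frac{2(n+1)}{n+3}$ is exactly the paper's argument: verify that $V\ge 0$ forces zero to be regular in $L^{p_0'}$, apply Theorem~\ref{thm1.2}, and read off the density bound from Stone's formula. The final Riesz--Thorin interpolation is also the same.

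The gap you yourself flag at $p=1$ is real, and the fix you propose does not close it. From the dyadic projector bound $\|E_H([\lambda,2\lambda])\|_{L^1\to L^\infty}\le C\lambda^{n/2}$ one cannot in general pass to a pointwise density bound $\|dE_H(\lambda)\|_{L^1\to L^\infty}\le C\lambda^{(n-2)/2}$ without knowing that the density exists and varies continuously \emph{in the $L^1$--$L^\infty$ operator norm}. Theorem~\ref{thm1.2} only supplies continuity of $(H-(\lambda+i0))^{-1}$ in the $L^{p_0}$--$L^{p_0'}$ norm, which is the wrong topology; a spectral density that is continuous $L^{p_0}\to L^{p_0'}$ could still be highly singular as an $L^1\to L^\infty$ operator-valued function of $\lambda$, so the differentiation step is unjustified as written.

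The paper sidesteps this by a sandwich identity that works directly at the level of the density. From the Gaussian heat bound one gets, via the Laplace transform, $\|(1+tH)^{-k}\|_{L^p\to L^q}\le Ct^{-\frac n2(\frac1p-\frac1q)}$ whenever $k>\frac n2(\frac1p-\frac1q)$. Because $dE_H(\lambda)$ is supported at the spectral value $\lambda$, functional calculus gives the exact identity
\[
dE_H(\lambda)=2^{2k}\,(1+H/\lambda)^{-k}\,dE_H(\lambda)\,(1+H/\lambda)^{-k},
\]
and hence
\[
\|dE_H(\lambda)\|_{L^1\to L^\infty}\le C\,\|(1+H/\lambda)^{-k}\|_{L^{p_0'}\to L^\infty}\,\|dE_H(\lambda)\|_{L^{p_0}\to L^{p_0'}}\,\|(1+H/\lambda)^{-k}\|_{L^1\to L^{p_0}}\le C\lambda^{\frac n2-1}.
\]
Thus the $L^{p_0}$--$L^{p_0'}$ density bound from Step~1 is \emph{upgraded} to the $L^1$--$L^\infty$ density bound in one line, with no differentiation in $\lambda$ needed. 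Replacing your dyadic-projector/differentiation argument by this sandwich identity closes the gap and completes the proof.
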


The proof will be given in Section 2, which relies on the fact that zero is regular in $L^\frac{2(n+1)}{n-1}$ when $V\ge 0$.  The sharpness of \eqref{1.41} indicates that  the range of  $p$ in estimates \eqref{1.7} of $dE_H(\lambda)$ is also sharp. Note that in  Chen, et al \cite[Section 7]{COSY}, a smaller range $1\le p\leq \frac{2n}{n+2}$ was obtained based on  dispersive estimates for $e^{itH}$. On the other hand, in Sikora, Yan and the second author \cite{SYY}, non-negative potentials with small enough $L^p$ norm are required in order to obtain the optimal range of $p$.

In section \ref{sec2}, we shall apply Theorem \ref{thm1.4} to establish  H\"ormander-type spectral multiplier theorems, which devote to $L^p$ estimates of a spectral operator $F(H)$ initially defined in $L^2$ through the functional calculus $$F(H)=\int F(\lambda)dE_H(\lambda).$$
The connection with $L^p$ bounds of Bochner-Riesz means associated with $H$ is also discussed, see Theorem \ref{thm1.5} and Corollary \ref{cor1.6}.

The second part of this paper is devoted to extend \eqref{1.4} to the fractional Schr\"{o}dinger operators $H=(-\Delta)^{\alpha}+V$, where $0<2\alpha<n$. Thus it's natural to first prove uniform estimates for the fractional Laplacian  $(-\Delta)^{\alpha}$. We note that besides estimates \eqref{1.1}, the following uniform Sobolev estimates obtained in Kenig, Ruiz and Sogge \cite{KRS} are also closely related.
\begin{align}\label{1.8}
\|u\|_{L^q(\mathbb{R}^n)}\leq C_{p, q}\|(\Delta+z)u\|_{L^p(\mathbb{R}^n)},\, u\in C_0^{\infty}(\mathbb{R}^n),\, z\in \mathbb{C},
\end{align}
where
\begin{align}\label{1.9}
\frac1p-\frac1q=\frac2n \,~~ \text{and}\,~~ \min\left(\ \Big|\frac1p-\frac12\Big|,\Big|\frac1q-\frac12\Big|\ \right)> \frac{1}{2n}.
\end{align}
\eqref{1.8} was originally motivated by certain unique continuation problems for Schr\"{o}dinger operators $-\Delta+V$, which turned out to be connected with many other problems as well, see e.g. \cite{F, F15, FS} for applications of estimating the eigenvalue bounds of Schr\"{o}dinger operators. Hence it's of independent interest to extend \eqref{1.8} to more general situations. Indeed, there are lots of work concerning various generalizations on manifolds, see e.g. \cite{BSSY, DKS, GH, HS, KU,SY,Shen} and the references therein. Recently Sikora, Yan and the second author \cite{SYY} proved uniform Sobolev estimates for real homogeneous elliptic operators under a non-degenerate condition. In this paper, we shall prove the following theorem.
\vskip0.3cm
\begin{theorem}\label{thm1.7}
Let $n\geq 3$, if $\frac{n}{n+1}\leq \alpha<\frac n2$ and  $1<p<q<\infty$ are Lebesgue exponents satisfying
\begin{equation}\label{1.10}
\frac1p-\frac1q=\frac{2\alpha}{n} ~~\text{and}~~ \min\left(\ \Big|\frac1p-\frac12\Big|,\Big|\frac1q-\frac12\Big|\ \right)> \frac{1}{2n},
\end{equation}
then there is a uniform constant $C_{p,q}<\infty$, such that for all $z\in \Bbb{C}$,
\begin{equation}\label{1.11}
\|u\|_{L^q(\mathbb{R}^n)}\leq C_{p,q}\|((-\Delta)^{\alpha}-z)u\|_{L^p(\mathbb{R}^n)},\quad u\in C_{0}^{\infty}(\mathbb{R}^n).
\end{equation}
On the other hand, if $0<\alpha<\frac{n}{n+1}$, then no such uniform estimates exist.
\end{theorem}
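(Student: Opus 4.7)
The plan is to interpret \eqref{1.11} as a uniform $L^p\to L^q$ bound on the Fourier multiplier $m_{\alpha,z}(\xi) = (|\xi|^{2\alpha} - z)^{-1}$ and to adapt the Kenig-Ruiz-Sogge strategy for $\alpha=1$. First I would scale out the spectral parameter: the substitution $u(x)\mapsto u(\lambda x)$ combined with $(-\Delta)^\alpha u(\lambda\,\cdot) = \lambda^{2\alpha}((-\Delta)^\alpha u)(\lambda\,\cdot)$ and the hypothesis $\frac{1}{p}-\frac{1}{q}=\frac{2\alpha}{n}$ makes the estimate scale-invariant under $z\mapsto \lambda^{-2\alpha}z$, so it is enough to treat $z=0$ and $|z|=1$. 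The case $z=0$ is the Hardy-Littlewood-Sobolev inequality for the Riesz potential $(-\Delta)^{-\alpha}$, valid under \eqref{1.10} and $1<p<q<\infty$. For $|z|=1$ with $z$ bounded away from $1$, the symbol $m_{\alpha,z}$ is smooth with Mikhlin-type bounds, so the substantive case is $z=1\pm i0$.

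For $z=1\pm i0$ I would introduce a smooth partition of unity $1=\chi_{\mathrm{low}}(\xi)+\chi_{\mathrm{sph}}(\xi)+\chi_{\mathrm{high}}(\xi)$ with $\chi_{\mathrm{sph}}$ supported in a thin annular neighborhood of the unit sphere and $\chi_{\mathrm{low}},\chi_{\mathrm{high}}$ supported in $|\xi|\le 1/2$ and $|\xi|\ge 2$ respectively. The low-frequency multiplier has a Schwartz kernel, so Young's inequality yields the $L^p\to L^q$ bound. The high-frequency multiplier is comparable to $|\xi|^{-2\alpha}$ times a Mikhlin symbol, so Hardy-Littlewood-Sobolev gives the bound along the entire Sobolev line.

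The main obstacle, and the step that forces the threshold $\alpha\ge\frac{n}{n+1}$, is the near-sphere piece. Because $\nabla_\xi(|\xi|^{2\alpha}) = 2\alpha|\xi|^{2\alpha-2}\xi$ does not vanish on $\{|\xi|=1\}$, the singularity of $m_{\alpha,1+i0}$ is a simple pole on the smooth compact hypersurface $S^{n-1}$ with the same local structure as $(|\xi|^2-1-i0)^{-1}$, up to a smooth nonvanishing Jacobian. In particular, the coarea formula gives $\operatorname{Im} m_{\alpha,1+i0} = \frac{\pi}{2\alpha}\, d\sigma_{S^{n-1}}$, whose inverse Fourier transform is precisely the Stein-Tomas extension operator, bounded from $L^{\frac{2(n+1)}{n+3}}$ to $L^{\frac{2(n+1)}{n-1}}$. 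The principal-value (real) part I would handle by Stein's complex interpolation applied to the analytic family of multipliers
\begin{equation*}
m_s(\xi) = \chi_{\mathrm{sph}}(\xi)\, (|\xi|^{2\alpha}-1-i0)^{s},\qquad s\in\mathbb{C},
\end{equation*}
which on $\operatorname{Re} s = 0$ has $|m_s(\xi)|$ bounded with admissible growth in $|\operatorname{Im} s|$, giving an $L^2\to L^2$ bound; on $\operatorname{Re} s = -\frac{n+1}{2}$, after a coarea change of variable and regularization by $\Gamma(s+1)^{-1}$, the distribution is essentially a smooth density of $d\sigma_{S^{n-1}}$ and Stein-Tomas provides the $L^{\frac{2(n+1)}{n+3}}\to L^{\frac{2(n+1)}{n-1}}$ endpoint bound. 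Interpolating at $s=-1$ closes the near-sphere estimate at the Stein-Tomas endpoint pair. Real interpolation with the Hardy-Littlewood-Sobolev bound along the Sobolev line then fills in all $(p,q)$ satisfying \eqref{1.10}, which is possible precisely when the Sobolev gap $\frac{2\alpha}{n}$ is at least the Stein-Tomas gap $\frac{2}{n+1}$, i.e., when $\alpha\ge\frac{n}{n+1}$.

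For sharpness when $0<\alpha<\frac{n}{n+1}$ I would use a Knapp wave packet tangent to $S^{n-1}$ at $e_n$: let $\widehat{u_\delta}$ be a smooth bump supported in the tube $T_\delta=\{\xi : |(\xi-e_n)'|\le \delta,\, |\xi_n-1|\le \delta^2\}$. Standard scaling gives $\|u_\delta\|_{L^r}\sim \delta^{(n+1)/r'}$, while the bound $||\xi|^{2\alpha}-1|\lesssim \delta^2$ on $T_\delta$ yields $\|((-\Delta)^\alpha-1)u_\delta\|_{L^p}\lesssim \delta^2\|u_\delta\|_{L^p}$. Feeding these estimates into \eqref{1.11} with $z=1$ and letting $\delta\to 0$ forces $\frac{1}{p}-\frac{1}{q}\ge\frac{2}{n+1}$, which combined with $\frac{1}{p}-\frac{1}{q}=\frac{2\alpha}{n}$ gives $\alpha\ge\frac{n}{n+1}$, ruling out any uniform estimate for smaller $\alpha$.
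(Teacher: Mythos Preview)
Your approach is viable but differs substantially from the paper's. The paper works on the kernel side and reduces to the known case $\alpha=1$: for integer $\alpha=m$ via the partial-fraction identity $((-\Delta)^m-z)^{-1}=\tfrac{1}{mz}\sum_{k} z_k(-\Delta-z_k)^{-1}$, and for non-integer $\alpha$ via the Martinez--Sanz formula expressing $((-\Delta)^\alpha-s^\alpha)^{-1}$ as $\tfrac{s^{1-\alpha}}{\alpha}(-\Delta-s)^{-1}$ plus an explicit integral of standard resolvents $(\lambda-\Delta)^{-1}$. The resulting kernel is split at $|x|=1$; the local piece obeys $|K'(x)|\le C|x|^{2\alpha-n}$ and falls to Hardy--Littlewood--Sobolev, while the oscillatory far piece is handled by quoting Stein's oscillatory integral lemma (Lemma~2.4 of \cite{KRS}) for the constituent second-order resolvent kernels. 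Your Fourier-side frequency decomposition with analytic interpolation on $(|\xi|^{2\alpha}-1-i0)^s$ is more self-contained---no fractional-powers machinery is needed---but forces you to rerun the KRS endpoint analysis rather than cite it. Your Knapp necessity argument is likewise different from (and more direct than) the paper's, which instead observes that \eqref{1.11} at the self-dual exponent $p=\tfrac{2n}{n+2\alpha}$ would force a restriction estimate beyond the Stein--Tomas threshold.

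Two steps in your sketch need repair. At $\operatorname{Re} s=-\tfrac{n+1}{2}$ the correct endpoint is an $L^1\to L^\infty$ bound coming from a \emph{pointwise bounded kernel} (Bessel-function asymptotics), not a Stein--Tomas bound for a surface density; the Stein--Tomas exponent pair is what Stein interpolation then \emph{outputs} at $s=-1$. More importantly, that interpolation only delivers the single dual pair $\bigl(\tfrac{2(n+1)}{n+3},\tfrac{2(n+1)}{n-1}\bigr)$, which for $\alpha>\tfrac{n}{n+1}$ does not lie on your Sobolev line $\tfrac1p-\tfrac1q=\tfrac{2\alpha}{n}$; the phrase ``real interpolation with HLS'' does not bridge this, since HLS controls only the $z=0$ operator or the high-frequency piece. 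You need the full pentagon range for the near-sphere operator---obtained, as in \cite{KRS}, by further interpolating the diagonal bound with the Young bounds $L^1\to L^q$ for $q>\tfrac{2n}{n-1}$ coming from the $|x|^{-(n-1)/2}$ decay of the localized kernel---and then intersect that pentagon with the Sobolev line. A minor point: for non-integer $\alpha$ the low-frequency symbol is only $C^{\lfloor 2\alpha\rfloor}$ at $\xi=0$, so its kernel is $O(|x|^{-n-2\alpha})$ rather than Schwartz; it is still integrable, so Young's inequality survives.
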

We note that if $z=0$, then \eqref{1.11} becomes the classical Hardy-Littlewood-Sobolev inequality which is true for  $0<2\alpha<n$ and $\frac1p-\frac1q=\frac{2\alpha}{n}$, $1<p<q<\infty$. Our proof relies heavily on the relation between $((-\Delta)^{\alpha}-z)^{-1}$ and the special case $(-\Delta-z)^{-1}$ ( see \eqref{2.1} and \eqref{2.8} in Section \ref{sec3}), whose kernel can be written explicitly. Based on this observation, we are able to obtain  these estimates off the dual line $\frac1p +\frac1q=1$ by essentially using Stein's oscillatory integral theorem (see e.g. \cite[Lemma 2.2]{KRS}).

The rest of the paper is organized as follows. In section \ref{sec2}, we shall prove Theorem \ref{thm1.2} and Theorem \ref{thm1.4}.  As applications, a sharp spectral multiplier theorem and $L^p$ bound of Bochner-Riesz means associated with $H$ are given. Section \ref{sec3} is devoted to prove Theorem \ref{thm1.7} and related estimates for $H=(-\Delta)^{\alpha}+V$. Throughout the paper, $C$ and $C_j$ denote absolute positive constants whose dependence will be specified whenever necessary. The value of $C$ may vary from line to line.

\section{Limiting absorption principle and spectral multiplier estimates}\label{sec2}

\subsection{$L^p$-limiting absorption principle.} In this subsection, we will prove Theorem \ref{thm1.2} and Theorem \ref{thm1.4}.

{\bf Proof of Theorem \ref{thm1.2}.}
The proof is a perturbative approach, which is based on the  resolvent identity that for $\epsilon> 0$ and $\lambda>0$,
\begin{equation}\label{Reso-formula}R_V(\lambda+i\epsilon)=(I+R_0(\lambda+i\epsilon)V)^{-1}R_0(\lambda+i\epsilon),
\end{equation}
where $R_V(\lambda+i\epsilon)=(H-\lambda-i\epsilon)^{-1}$, $H=-\Delta+V$ and $R_0(\lambda+i\epsilon)$ denotes the resolvent of the free Laplacian. Since our goal is to prove the  $\epsilon-$uniform $L^{\frac{2(n+1)}{n+3}}$-$L^{\frac{2(n+1)}{n-1}}$ estimates of $R_{V}(\lambda+i\epsilon)$, it then suffices to  show that the inverse $(I+R_0(z)V)^{-1}$ exists as a bounded operator on $L^\frac{2(n+1)}{n-1}$, and its norm is uniformly bounded for  $z\in \mathbb{C_+}$, where $\mathbb{C_+}$ denotes the closed upper half plane $\mathbb{C}$. Then Theorem \ref{thm1.2} will follow by combining (\ref{Reso-formula}) and resolvent estimates of the free Laplacian, i.e., the estimates \eqref{1.1}. Now we divide it into the following three steps.

{\bf Step 1 (Existence of $(I+R_0(z)V)^{-1}$ on $L^\frac{2(n+1)}{n-1}$)}

It follows from Lemma 3.1 and Lemma 3.2 in \cite{GS} that $R_0(z)V$ is a compact operator on $L^\frac{2(n+1)}{n-1}$ for $z\in \mathbb{C_+}$ and the inverse $(I+R_0(z)V)^{-1}$ exists and is bounded on $L^\frac{2(n+1)}{n-1}$ for $z\in \mathbb{C_+}\setminus \{0\}$. Hence it remains to prove the case $z=0$.  Note that zero is assumed to be regular in $L^\frac{2(n+1)}{n-1}$, then the equation $(I+(-\Delta)^{-1}V)u=0$ only has the trivial  solution $u=0$ in $L^\frac{2(n+1)}{n-1}$. Hence Fredholm theorem implies that $(I+R_0(z)V)^{-1}$ is also bounded on $L^\frac{2(n+1)}{n-1}$ for the $z=0$.

{\bf Step 2 (The continuity of the map $z\in \mathbb{C_+} \mapsto R_0(z)V$)}

We will prove the map $z\mapsto R_0(z)V$ is continuous in uniform operator topology from the domain $z\in \mathbb{C_+}$ to the space of bounded operators on $L^\frac{2(n+1)}{n-1}$. For convenience, it suffices to prove instead that $T(\lambda)=R_0(\lambda^2)V$ is continuous for $\lambda$ in the first quadrant, i.e.,  $Re \lambda, Im\ \lambda\ge 0$. Suppose that $V$ is bounded, and supported in the ball $\{|x|\leq R\}$. First we consider the case $\lambda\ne0$. We set $0<|\lambda-\mu|<\min(\frac{|\lambda|}{2}, \frac{1}{2R})$ (which implies $\lambda\neq 0$). Note
that the kernel of $(-\Delta-\lambda^2)^{-1}$ satisfies $R_0(\lambda^2)(x)=|x|^{2-n}F(\lambda|x|)$, where $F(z)=z^{\frac{n-2}{2}}K_{\frac{n-2}{2}}(z)$ (see e.g. \cite[p. 338]{KRS}) and $K_\frac{n-2}{2}(z)$ denotes the modified Bessel function of the second kind. Using the fact that $|F(z)|, |F'(z)|\leq C(1+|z|)^{\frac{n-3}{2}}$, one has
\begin{equation}\nonumber
|R_0(\lambda^2)-R_0(\mu^2)(x,y)|\leq
\begin{cases}
|\lambda-\mu||x-y|^{3-n}, \quad \text{if}\quad |x-y|\leq |\lambda|^{-1},\\[4pt]
|\lambda-\mu||\lambda|^{\frac{n-3}{2}}|x-y|^{\frac{3-n}{2}}, \,\text{if} \, |\lambda|^{-1}\leq |x-y|\leq |\lambda-\mu|^{-1},\\[4pt]
|\lambda|^{\frac{n-3}{2}}|x-y|^{-\frac{n-1}{2}}\quad \text{if}\quad |x-y|\ge |\lambda-\mu|^{-1}.
\end{cases}
\end{equation}
Then a direct calculation yields that there exist constants $C_1, C_2$, depending on $\|V\|_{L^{\infty}}$ and $R$, such that
\begin{align*}
\|(R_0(\lambda^2)-R_0(\mu^2))Vf\|_{L^\frac{2(n+1)}{n-1}}&\leq \\
&(C_1(V,R)|\lambda-\mu|+C_2(V,R)|\lambda-\mu|^{\frac{n-1}{2(n+1)}})\|f\|_{L^\frac{2(n+1)}{n-1}}.
\end{align*}

Next we prove the continuity at the origin. Let $|\lambda|<\frac{1}{2R}$, we have
\begin{equation}\nonumber
|R_0(\lambda^2)-R_0(0)(x,y)|\leq
\begin{cases}
|\lambda||x-y|^{3-n}, \quad \text{if}\quad |x-y|\leq |\lambda|^{-1},\\[4pt]
|\lambda|^{\frac{n-3}{2}}|x-y|^{-\frac{n-1}{2}},\quad \text{if}\quad |x-y|\ge |\lambda|^{-1},
\end{cases}
\end{equation}
For the region $|x|>|\lambda|^{-1}$, one has
\begin{align*}
&\|\chi(|x|>|\lambda|^{-1})(R_0(\lambda^2)-R_0(\mu^2))Vf(x)\|_{L^{\frac{2(n+1)}{n-1}}}\\
&\leq C(V,R)\lambda^{\frac{n-3}{2}+\frac{n-1}{2(n+1)}}\|f\|_{L^{\frac{2(n+1)}{n-1}}},
\end{align*}
where $\chi$ denotes the characteristic function. For the region $|x|\leq|\lambda|^{-1}$, if $n=3$, H\"{o}lder's inequality indicates
$$
\|\chi(|x|\leq|\lambda|^{-1})(R_0(\lambda^2)-R_0(\mu^2))Vf(x)\|_{L^4}\leq C(V,R)\lambda^{\frac{1}{4}}\|f\|_{L^4},
$$
if $n>3$, notice that one can choose $r<\min(\frac{n}{n-3}, \frac{2(n+1)}{n-1})$ such that $n-2-\frac{n}{r}>0$ and $|x|^{3-n}\in L_{loc}^r(\mathbb{R}^n)$, then apply Young's inequality we get
\begin{align*}
&\|\chi(|x|\leq|\lambda|^{-1})(R_0(\lambda^2)-R_0(\mu^2))Vf(x)\|_{L^{\frac{2(n+1)}{n-1}}}\\
&\leq \lambda \||x|^{3-n}\chi(|x|\leq |\lambda|^{-1})\ast Vf\|_{L^{\frac{2(n+1)}{n-1}}}\\
&\leq C(V,R)\lambda^{n-2-\frac{n}{r}}\|f\|_{L^{\frac{2(n+1)}{n-1}}},
\end{align*}
which implies that the map $T(\lambda)$ is also continuous at $\lambda=0$.

In order to pass the arguments above to the general case, we note that for any $V\in L^{n/2}$, there exists a bounded function $\tilde{V}$ with compact support such that $\|V-\tilde{V}\|_{L^{n/2}}<\epsilon$.
Then by observing the uniform Sobolev estimates \eqref{1.11}, we can conclude that there exists a uniform constant $C$ such that $\sup_{\lambda\in \mathbb{C_+} }\|R_0(\lambda^2)(V-\tilde{V})\|_{L^{\frac{2(n+1)}{n-1}}-L^{\frac{2(n+1)}{n-1}}}\leq C\|V-\tilde{V}\|_{L^{n/2}}<\epsilon$. Hence the the continuity can be proved for general $V$ by using the following equality
$$
(R_0(\lambda^2)-R_0(\mu^2))V=R_0(\lambda^2)(V-\tilde{V})+(R_0(\lambda^2)-R_0(\mu^2))\tilde{V}+R_0(\mu^2)(\tilde{V}-V).
$$

{\bf Step 3 (Uniform boundedness of the norm $\|(I+R_0(z)V)^{-1}\|$ for $z\in \mathbb{C_+}$})

We have established the existence of $(I+R_0(z)V)^{-1}$, and further showed that  the map $z\mapsto R_0(z)V$ is continuous from the domain $z\in \mathbb{C_+}$ to the space of bounded operators on $L^\frac{2(n+1)}{n-1}$. Note that for any $z,\, z_1\in \mathbb{C_+}$, $z\ne z_1$,  $$(I+R_0(z)V)^{-1}-(I+R_0(z_1)V)^{-1}=(I+R_0(z)V)^{-1}\Big((R_0(z_1)V-R_0(z)V\Big)(I+R_0(z_1)V)^{-1}, $$
  then we can obtain that $(I+R_0(z)V)^{-1}$ is a continuous function of $z$ on $\mathbb{C_+}$. In particular, the norm $\|(I+R_0(\lambda+i\epsilon)V)^{-1}\|$ is uniformly bounded for all $0<\epsilon\le 1$ and $0\leq \lambda\leq r$ with any fixed $r>0$. On the other hand, in view of  H\"{o}lder's inequality, our assumption  $V\in L^p$ ($p>\frac{n}{2}$) yields that $\|V\|_{L^{{2(n+1)}/(n-1)}-L^r}\leq C$, where $\frac 1r-\frac{n-1}{2(n+1)}=\frac 1p<\frac 2n$. In addition, an interpolation between estimates \eqref{1.1} and \eqref{1.11} gives $\|R_0(z)\|_{L^r-L^{{2(n+1)}/(n-1)}}\leq C|z|^{\frac{n}{2p}-1}$. Thus there exists some large enough constant $c_0>0$ such that $\|R_0(z)V\|_{L^{{2(n+1)}/(n-1)}-L^{{2(n+1)}/(n-1)}}< \frac12$ provided $z\in \mathbb{C_+}$ with $|z|\ge c_0$. So the Neumann series expansion directly shows that $$\sup_{|z|\ge c_0}\|(I+R_0(z)V)^{-1}\|_{L^{\frac{2(n+1)}{n-1}}-L^{\frac{2(n+1)}{n-1}}}\leq 2.$$ As a conclusion, we obtain that
$$\sup_{\lambda\in \mathbb{R},\, 0<\epsilon\le 1}\|(I+R_0(\lambda+i\epsilon)V)^{-1}\|_{L^{\frac{2(n+1)}{n-1}}-L^{\frac{2(n+1)}{n-1}}}< \infty,$$
which completes the proof of Theorem \ref{thm1.2}.
$\hfill{} \Box$

\vskip0.3cm
{\bf Proof of  Theorem \ref{thm1.4}.} Note that by the Riesz-Thorin interpolation theorem, it suffices to prove \eqref{1.7} for the endpoint $p=\frac{2(n+1)}{n-1}$ and $p=1$ respectively.

{\bf Step 1 (the case $p=\frac{2(n+1)}{n-1}$)} Recall that  $0\leq V\in L^{\frac n2}\cap L^{\frac{n}{2}+\sigma}$ for some $\sigma>0$, we first show that the assumptions on $V$ implies that $0$ is regular in $L^\frac{2(n+1)}{n-1}$.
To this end, we may assume that there exists some $f\in L^{\frac{2(n+1)}{n-1}}$ such that  $f=-(-\Delta)^{-1}Vf$. Set $g=Vf$, then H\"{o}lder's inequality indicates $\Delta f=g\in L^r$ with $\frac1r=\frac{n-1}{2(n+1)}+\frac1 p$ for some $p>{n\over 2}$.  Then  from $f=-(-\Delta)^{-1}g$ and  the Sobolev inequality  we can conclude that $f\in L^{\frac{2(n+1)}{n-1}}\cap L^{q}$ for some  $q>\frac{2(n+1)}{n-1}$ by
 $$\frac1q=\frac{n-1}{2(n+1)}+\frac1 p-\frac 2n< \frac{n-1}{2(n+1)}.$$
  Now by repeating this bootstrapping procedure we can obtain  $f\in L^{\frac{2(n+1)}{n-1}}\cap L^{q}$ for all  $q>\frac{2(n+1)}{n-1}$.  On the other hand, note that $V\in L^{\frac n2}$, then H\"{o}lder's inequality also gives $\Delta f=-V f\in L^s$  by $$\frac1s=\frac{n-1}{2(n+1)}+\frac2n< \frac{ n+3} {2(n+1)},$$
which implies that $ s'>\frac{2(n+1)}{n-1}$.  Hence  we can choose $q=s'$ such that $f\in L^{\frac{2(n+1)}{n-1}}\cap L^{s'}$ and the inner product $(\Delta f, f)$ makes sense. Besides, in view of Gagliardo-Nirenberg interpolation inequality, we can conclude that $\nabla f\in L^2$ and $$\|\nabla u\|_{L^2}\le C \|\Delta f\|_{L^{s}}^{1/2}\|f\|_{L^{s'}}^{1/2}<\infty.$$  Thus it follows that
$$
0\leq (Vf, f)=(\Delta f, f)=-\|\nabla f\|^2 \leq 0,
$$
which implies  $f=0$ in this case, i.e zero is regular in $L^{\frac{2(n+1)}{n-1}}$. Hence we are allowed to apply the uniform resolvent estimates \eqref{1.6} from Theorem \ref{thm1.2}. Thus, by Stone's formula
 $$
 dE_{H}(\lambda)f=\frac{1}{2\pi i}((H-(\lambda+i0))^{-1}-(H-(\lambda-i0))^{-1})f,
 $$
the estimate \eqref{1.7} is valid for $p=\frac{2(n+1)}{n+3}$.

{\bf Step 2 (the case $p=1$)} Since under our assumption on the potential $V$, the semigroup $e^{-tH}$ satisfies the following Gaussian estimates
\begin{align}\label{2.02}
|e^{-tH}(x,y)|\leq Ct^{-\frac n2}\exp\left(-\frac{|x-y|^2}{4t}\right)
\end{align}
for some $C>0$, see e.g. \cite{Da}. Hence we obtain that for $1\leq p\leq q\leq \infty$,
\begin{align}\label{2.03}
\|e^{-tH}\|_{L^p-L^q}\leq Ct^{-\frac n2(\frac1p-\frac1q)},\,\,\,~~~t>0.
\end{align}
According to the Laplace transform formula
$$
(1+tH)^{-k}=\frac{1}{\Gamma(k)}\int_0^{\infty}e^{-utH}u^{k-1}du,\,\,~~~t>0,\,\,k>0,
$$
we have
\begin{align}\label{2.04}
\|(1+tH)^{-k}\|_{L^p-L^q}&\leq \frac{1}{\Gamma(k)}\int_0^{\infty}e^{-u}(ut)^{-\frac n2(\frac1p-\frac1q)}u^{k-1}du\nonumber\\
&\leq Ct^{-\frac n2(\frac1p-\frac1q)},
\end{align}
provided $k>\frac n2(\frac1p-\frac1q)$. Note that $dE_{H}(\lambda)=2^{2k}(1+H/\lambda)^{-2k}dE_{H}(\lambda)$. Then combine \eqref{2.04} with $L^{p_0}-L^{p'_0}$ estimate of $dE_{H}(\lambda)$ obtained already in step 1, it follows that (see e.g. \cite[Lemma 3.3]{SYY})
\begin{align}\label{2.05}
\|dE_{H}(\lambda)\|_{L^1-L^{\infty}}&
= 2^{2k}\|(1+H/\lambda)^{-k}dE_{H}(\lambda)(1+H/\lambda)^{-k}\|_{L^1-L^{\infty}}\nonumber\\
&\leq C\|(1+H/\lambda)^{-k}\|_{L^{p'_0}-L^{\infty}}\|dE_{H}(\lambda)\|_{L^{p_0}-L^{p'_0}}\|(1+H/\lambda)^{-k}\|_{L^1-L^{p_0}}\nonumber\\
&\leq C\lambda^{\frac n2-1},\,\,\, \lambda>0.
\end{align}
Therefore the desired estimate \eqref{1.7} is valid for all $1\leq p\leq \frac{2(n+1)}{n+3}$.
 $\hfill{} \Box$


\subsection{Applications to spectral multipliers $f(H)$.}
We start by recalling that via a well known $T^*T$ argument (see e.g. Sogge \cite{So}), spectral measure estimate \eqref{1.41} is in fact equivalent to the following Stein-Tomas theorem.
\begin{align}\label{1.5}
(\int_{S^{n-1}}|\hat{f}|^2\, d\sigma)^{\frac12}\leq C\|f\|_{L^p(\mathbb{R}^n)}, ~~~ 1\leq p\leq \frac{2(n+1)}{n+3}.
\end{align}
Such restriction type of estimates are essentially required to obtain sharp results in the theory of spectral multipliers, we refer to \cite{COSY, SYY0, SYY} and literature therein.

As mentioned before,  Theorem \ref{thm1.4} will be used to obtain certain spectral multiplier estimates.
We remark that there are a great number of works devoted to the $L^p$-theory of spectral multipliers $f(H)$ of non-negative self-adjoint operators. This is  a related  area of harmonic analysis, which has attracted a lot of attention during the last thirty years or so. The literature devoted to the subject is so broad that it is impossible to provide complete and comprehensive bibliography. Therefore we quote only a few recent articles, which are directly related to our study, e.g. see \cite{Blu, COSY, DOS, Heb, KU, SYY0, SYY} and therein references.  Actually, for Schr\"odinger operator $H=-\Delta+V$, once we have the spectral measure estimate \eqref{1.7}, then we have the following conclusion.

\begin{theorem}\label{thm1.5}
Suppose that $n\ge 3$, $H=-\Delta+V$ satisfies the assumptions of Theorem \ref{thm1.4}. For any bounded Borel function $F$ such that $\sup_{t>0}\|\eta F(t\cdot)\|_{W^{\alpha}_2}<\infty$
for some $\alpha>\max\{n(\frac{1}{p}-\frac 12), \frac 12\}$ and $1\leq p \leq \frac{2(n+1)}{n+3}$. Here $\eta\in C_0^{\infty}(0, \infty)$ is an arbitrary non-zero auxiliary function and $\|F\|_{W^{\alpha}_2}=\|(1-\frac{d^2}{dx^2})^{\frac{\alpha}{2}}F\|_{L^2(\mathbb{R})}$. Then the operator $F(H)$ is bounded on $L^r(\mathbb{R}^n)$ for all $p< r< p'$.
Furthermore, if the function $F$ is even and compactly supported, then the operator $F(H)$ is bounded for all $p\le r\le p'$.
\end{theorem}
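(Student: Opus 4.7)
The plan is to deduce Theorem \ref{thm1.5} as a direct instance of the abstract restriction-type spectral multiplier theorem developed in \cite{COSY} (and refined in \cite{SYY}). That framework requires two ingredients for the non-negative self-adjoint operator $H=-\Delta+V$: a pointwise Gaussian upper bound on $e^{-tH}(x,y)$, and a Stein--Tomas type spectral measure estimate $\|dE_H(\lambda)\|_{L^p\to L^{p'}}\le C\lambda^{\frac{n}{2}(\frac 1p-\frac 1{p'})-1}$ for all $\lambda>0$. The first input is precisely \eqref{2.02}, which holds under the hypothesis $0\le V\in L^{n/2}\cap L^{n/2+\sigma}$ by Davies \cite{Da}; the second is Theorem \ref{thm1.4}. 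Thus the argument reduces to verifying the hypotheses of the COSY-type theorem and invoking it.

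For completeness I would carry out the usual dyadic argument. Fix $\eta\in C_0^\infty(0,\infty)$ with $\sum_{k\in\Z}\eta(2^{-k}\lambda)=1$ on $(0,\infty)$ and write $F(H)=\sum_{k\in\Z}F_k(H)$ with $F_k(\lambda)=\eta(2^{-k}\lambda)F(\lambda)$; the hypothesis ensures that $\|F_k(2^k\cdot)\|_{W^\alpha_2}$ is uniformly bounded in $k$. Combining Theorem \ref{thm1.4} with a standard Plancherel-type argument yields, for each dyadic piece, a bound of the form $\|F_k(H)\|_{L^p\to L^{p'}}\le C\,2^{k\frac{n}{2}(\frac 1p-\frac{1}{p'})}\|F_k(2^k\cdot)\|_{L^2}$. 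The one-dimensional Sobolev embedding $W^\alpha_2\hookrightarrow L^\infty$ (which uses $\alpha>\frac12$), together with the off-diagonal Gaussian decay of the kernel of $F_k(H)$ produced by \eqref{2.02} and a Calder\'on--Zygmund / finite-propagation-speed argument, allows interpolation with the trivial $L^2\to L^2$ bound and summation of the dyadic series when $\alpha>n(\frac 1p-\frac 12)$. This delivers $L^r$-boundedness on the open interval $p<r<p'$.

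The closed endpoint $r=p$ (and by duality $r=p'$) is exactly where the evenness and compact support of $F$ enter. Compact support reduces the dyadic decomposition to finitely many non-zero pieces, eliminating convergence issues at infinity; evenness allows $F$ to be viewed as a smooth function of $\sqrt{H}$ that extends smoothly through the origin, so the behaviour at the bottom of the spectrum does not create a singularity. The endpoint bound for each surviving $F_k(H)$ then follows by pairing Theorem \ref{thm1.4} with the Gaussian bound through the same Calder\'on--Zygmund argument, and summing finitely many terms.

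The principal obstacle has already been cleared in establishing Theorem \ref{thm1.4}: the key novelty relative to earlier works (e.g.\ \cite[Section 7]{COSY}, where only $1\le p\le \frac{2n}{n+2}$ is available through dispersive estimates for $e^{itH}$) is precisely the uniform spectral measure bound down to $\lambda=0^+$. Once that is in place, pushing the sharp range of admissible $p$ out to the full interval $1\le p\le \frac{2(n+1)}{n+3}$ in Theorem \ref{thm1.5} is then a routine application of the abstract machinery; so the work is concentrated in Theorem \ref{thm1.4} and its hypothesis that zero is regular with respect to $H$.
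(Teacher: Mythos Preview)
Your proposal is correct and follows essentially the same route as the paper: verify the two hypotheses of the abstract spectral multiplier theorem in \cite{SYY} (Gaussian heat kernel bound \eqref{2.02} and the spectral measure estimate \eqref{1.7} from Theorem \ref{thm1.4}) and then invoke that result directly. The paper's proof is even terser, simply citing \cite[Proposition 2.2]{SYY} without sketching the dyadic mechanism or the endpoint argument you outline, so your additional detail is consistent with but more expansive than the paper's own presentation.
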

 \begin{proof}
 We first recall that in \cite[Proposition 2.2]{SYY}, it was shown that for a non-negative self-adjoint operator $L$ on $L^2(X)$, where homogeneous space $(X, \mu)$ satisfies the doubling conditions. If $H$ satisfies \eqref{2.02} (only Davies-Gaffney estimates are required), and  estimate \eqref{1.7} is valid for $1\leq p\leq \frac{2(n+1)}{n+3}$, then for any bounded Borel function $F$ such that $\sup_{t>0}\|\eta F(t\cdot)\|_{W^{\alpha}_2}<\infty$
for some $\alpha>\max\{n(\frac{1}{p}-\frac 12), \frac 12\}$ with $1\leq p \leq \frac{2(n+1)}{n+3}$.  Then the operator $F(H)$ is bounded on $L^r(\mathbb{R}^n)$ for all $p< r< p'$. Now consider $H=-\Delta+V$ on   $L^2(\mathbb{R}^n)$, we use again the fact that under the assumption $0\leq V\in L^{1}_{\text{loc}}$, the semigroup $e^{-tH}$ satisfies the Gaussian estimates \eqref{2.02}. Hence the proof is complete by applying \eqref{1.7} in Theorem \ref{thm1.4}.
\end{proof}


When $p=1$, Theorem \ref{thm1.5} corresponds to a version of classical H\"{o}rmander Fourier multiplier theorem (see e.g. Stein \cite{St} ), which measures the needed regularities of spectral function $F$ in Sobolev space. The variable parameter $p$ in Theorem \ref{thm1.5} is interesting and important.  A remarkable applied example of spectral multipliers is Bochner-Riesz means. Let's recall that Bochner-Riesz operators of index $\delta$ for a non-negative self-adjoint operator $H$ are defined by
$$
S^{\delta}_{R}(H)=\frac{1}{\Gamma(\delta+1)}\left(1-\frac{H}{R}\right)^{\delta}_+, \,\,\, R>0.
$$
Since $(1-\lambda)_+^\delta\in W^{\alpha}_2$
 if and only if $\delta>\alpha-1/2$. Applying Theorem \ref{thm1.5} above (see also \cite[Coro. 3.2]{COSY}), then we obtain the following result.
\vskip0.3cm
\begin{corollary}\label{cor1.6}
Suppose $H$ satisfies assumptions of Theorem \ref{thm1.5} and $1\leq p\leq \frac{2(n+1)}{n+3}$. Then for $\delta>n(1/p-1/2)-1/2$, we have
$$
\sup_{R>0}\|S^{\delta}_{R}(H)\|_{L^p-L^p}\leq C.
$$
\end{corollary}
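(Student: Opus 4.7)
The plan is to view $S_R^\delta(H)$ as a spectral multiplier $F_R(H)$ with $F_R(\lambda) = \Gamma(\delta+1)^{-1}(1-\lambda/R)_+^\delta$, extended evenly to all of $\mathbb{R}$. This extension does not change the operator, since the hypothesis $V\ge 0$ forces $\sigma(H) \subset [0,\infty)$, and the extended $F_R$ is then even and compactly supported in $[-R,R]$. Consequently the second clause of Theorem \ref{thm1.5} delivers $L^p$-boundedness at the endpoint $r=p$, provided we can verify
\begin{equation*}
\sup_{R>0}\;\sup_{t>0}\;\|\eta(\cdot)\,F_R(t\cdot)\|_{W_2^\alpha} < \infty
\end{equation*}
for some $\alpha > \max\{n(1/p-1/2),\,1/2\}$. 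Note that on the range $1\le p\le 2(n+1)/(n+3)$ with $n\ge 3$, one has $n(1/p-1/2)\ge n/(n+1)>1/2$, so the binding constraint is simply $\alpha > n(1/p-1/2)$.

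The decisive one-dimensional input is the classical fact that $(1-u)_+^\delta\in W_2^\alpha(\mathbb{R})$ if and only if $\delta > \alpha - 1/2$. To extract the required uniform bound, I would exploit the scaling identity $F_R(t\lambda)=\Gamma(\delta+1)^{-1}(1-(t/R)\lambda)_+^\delta$, which reduces the two parameters $(t,R)$ to the single dilation $s=t/R>0$. Fixing $\mathrm{supp}\,\eta\subset[a,b]\subset(0,\infty)$, the problem splits into three natural regimes: for $s\ge 1/a$ the product vanishes identically on $\mathrm{supp}\,\eta$; for $s\le 1/b$ the truncation is smooth on $\mathrm{supp}\,\eta$ and its $W_2^\alpha$ norm is trivially controlled; and for the bounded middle range $s\in(1/b,1/a)$ containing the corner singularity at $\lambda=1/s$, a change of variables $u=s\lambda$ reduces the Sobolev norm to that of $(1-u)_+^\delta$ up to a harmless Jacobian factor. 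Each regime yields a constant depending only on $\eta,\delta,\alpha$, and none on $(t,R)$.

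Since $\delta > n(1/p-1/2)-1/2$, we can pick $\alpha$ with $n(1/p-1/2)<\alpha<\delta+1/2$, and the hypothesis of Theorem \ref{thm1.5} is satisfied. The main subtlety, modest here, is the uniformity in $R$; this is handled cleanly by the scaling identity, which is essentially the only feature of $F_R$ (beyond its pointwise definition) that one needs. No new analysis of $H$ itself is required, as the Gaussian semigroup bounds and the spectral-measure estimate \eqref{1.7} have already been absorbed into Theorem \ref{thm1.5}.
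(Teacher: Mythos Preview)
Your proposal is correct and follows exactly the approach the paper takes: invoke Theorem~\ref{thm1.5} together with the one-dimensional fact that $(1-\lambda)_+^\delta\in W_2^\alpha$ if and only if $\delta>\alpha-1/2$. The paper's own argument is a single sentence; you have simply spelled out the details (the even extension to access the endpoint clause, and the scaling reduction $s=t/R$ for uniformity in $R$) that the paper leaves implicit.
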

\vskip0.3cm
In the free case $H=-\Delta$, it's well known that such Bochner-Riesz mean result is a consequence of Stein-Tomas restriction estimates \eqref{1.5}, see e.g. Stein \cite[p. 390]{St}. Also see Sogge \cite{Sog} for Riesz means on compact manifold. In the present perturbed operator $H=-\Delta+V$, the spectral measure estimates \eqref{1.7} play a similar role as restriction estimates \eqref{1.5} in the proof of Corollary \ref{cor1.6}, as showed in \cite{COSY, SYY0}.
\vskip0.3cm
Finally, we mention that in scattering theory, the following intertwining identity
$$
f(H)P_c=W_{\pm}f(-\Delta)W_{\pm}^*
$$
is valid for a Borel functions $f$, where $P_c$ denotes the projection on the continuous spectrum and $W_{\pm}$ represents the corresponding wave operators. It is well known that Yajima \cite{Y}  showed that $W_{\pm}$ is bounded on $L^p$ ($1\le p\le \infty$), thus based on the identity above, the $L^p$ boundedness of $f(H)P_c(H)$ is reduced to the free case $f(-\Delta)$. However, compared to our Theorem \ref{thm1.5}, this wave operator approach requires quite fast decay condition for $V$ or certain regularity depending on dimension (for example, he needed $|V|\lesssim \langle x \rangle^{-5-\epsilon}$ in three dimension).

\section{Limiting absorption for  fractional Schr\"{o}dinger operators}\label{sec3}

\subsection{Proof of Theorem \ref{thm1.7}  (the fractional Laplacian $(-\Delta)^\alpha$)}
We first prove the case  $0<\alpha<\frac{n}{n+1}$. Suppose
estimate \eqref{1.11} is valid for all $z$, in particular, take $p=\frac{2n}{n+2\alpha}$, $q=p'=\frac{2n}{n-2\alpha}$, and choose $z=1-i\epsilon$ with $\epsilon> 0$, then we have
\begin{align}\label{3.01}
\|((-\Delta)^{\alpha}-1+i\epsilon)^{-1}f\|_{L^{\frac{2n}{n-2\alpha}}}\leq C\|f\|_{L^{\frac{2n}{n+2\alpha}}}.
\end{align}
Observe that
$$
\Im \frac{1}{1-|\xi|^{2\alpha}-i\epsilon}=\frac{\epsilon}{(1-|\xi|^{2\alpha})^2+\epsilon^2},
$$
which converges weakly to $d\sigma_{S^{n-1}}$, the surface measure on the unit sphere $S^{n-1}\subset\mathbb{R}^n$ as $\epsilon\to 0$. Therefore it follows from the standard $TT^*$ arguments that
\begin{align*}
(\int_{S^{n-1}}|\hat{f}|^2\, d\sigma)^{\frac12}\leq C\|f\|_{L^{{2n}/(n+2\alpha)}}, \quad f\in L^{{2n}\over n+2\alpha}.
\end{align*}
However, the assumption $0<\alpha<\frac{n}{n+1}$ implies that $\frac{2n}{n+2\alpha}>\frac{2(n+1)}{n+3}$, which contradicts to the Stein-Tomas restriction  theorem (see \eqref{1.5}).

Now we shall prove that estimate \eqref{1.11} is valid under the assumption \eqref{1.10}, and that $\alpha\ge \frac{n}{n+1}$. By homogeneity consideration and the gap condition $n(\frac1p-\frac1q)=2\alpha$, we can assume from now on that $|z|=1$. Denote by $K$ the Schwartz kernel of the resolvent $((-\Delta)^{\alpha}-z)^{-1}$, it's convenient to write $K=K'+K''$, where $K'(x)=K(x)$, if $|x|\leq 1$, and 0 otherwise.

{\bf Case 1: $\alpha$ is an integer.} Suppose $\alpha=m\in \mathbb{Z}^+$, in this case the resolvent can be expressed as
\begin{equation}\label{2.1}
((-\Delta)^{m}-z)^{-1}f= \frac{1}{mz}\sum_{k=0}^{m-1}z_k(-\Delta-z_k)^{-1}f,\quad f\in C_{0}^{\infty}(\mathbb{R}^n),
\end{equation}
where $z_k=z^{\frac1m}e^{i\frac{2k\pi}{m}} (k=0,1,\ldots m-1$) are the $k$-th root of $z$. To estimate $K'$, we shall need the following
expression of the Green function of $(-\Delta-z_k)^{-1}$ (see e.g. \cite[p. 338]{KRS})
\begin{equation}\label{2.2}
(-\Delta-z_k)^{-1}(x,y)= (\frac{-z_k}{|x-y|^2})^{\frac{n-2}{4}}K_{\frac{n-2}{2}}((-z_k)^{\frac12}|x-y|),
\end{equation}
where $K_\frac{n-2}{2}(z)$ has the following asymptotic expansion
\begin{equation}\label{2.3}
K_\frac{n-2}{2}(z)=\frac{\pi\csc({(n-2)\pi}/{2})}{2}\sum_{j=0}^{m-1}\frac{(z/2)^{2j-\frac n2+1}}{\Gamma(j-\frac n2+2)j!}+o(z^{2m-\frac n2-1}),\, z\to 0,
\end{equation}
if $n$ is odd, and
\begin{equation}\label{2.4}
K_\frac{n-2}{2}(z)=\frac{1}{2}\sum_{j=0}^{m-1}\frac{(-1)^j(\frac n2-j-2)!}{j!}(\frac z2)^{2j-\frac n2+1}+o(z^{2m-\frac n2-1}),\, z\to 0,
\end{equation}
if $n$ is even, where we have used the fact that in this case $n\ge 2m+2$. We also note that
\begin{equation}\label{2.5}
\sum_{k=0}^{m-1}z_k^{j+1}=
\begin{cases}
0, \quad \text{if}\quad j=0,1,\ldots,m-2,\\[4pt]
m, \quad \text{if} \quad j=m-1.
\end{cases}
\end{equation}
Then if follows from \eqref{2.1}-\eqref{2.5} that
$$
|K'(x)|\leq C|x|^{2m-n},
$$
hence, Hardy-Littlewood-Sobolev inequality yields
\begin{equation}\label{2.6}
\|K'*f\|_{L^q(\mathbb{R}^n)}\leq C\|f\|_{L^p(\mathbb{R}^n)},
\end{equation}
where $\frac1p-\frac1q=\frac{2m}{n}$.
Thanks to the expression \eqref{2.1}, it follows immediately  from Stein's oscillatory integral theorem in \cite[Lemma 2.4]{KRS} that
\begin{equation}\label{2.7}
\|K''*f\|_{L^q(\mathbb{R}^n)}\leq C\|f\|_{L^p(\mathbb{R}^n)},
\end{equation}
where $(\frac1p,\frac1q)$ lies either on the open line segment $SS'$ or in the interior of the pentagon $SAOA'S'$ in the Figure below.
Combining \eqref{2.6} and \eqref{2.7}, we prove the estimates \eqref{1.11} in this case.
\begin{figure}
\centering\includegraphics{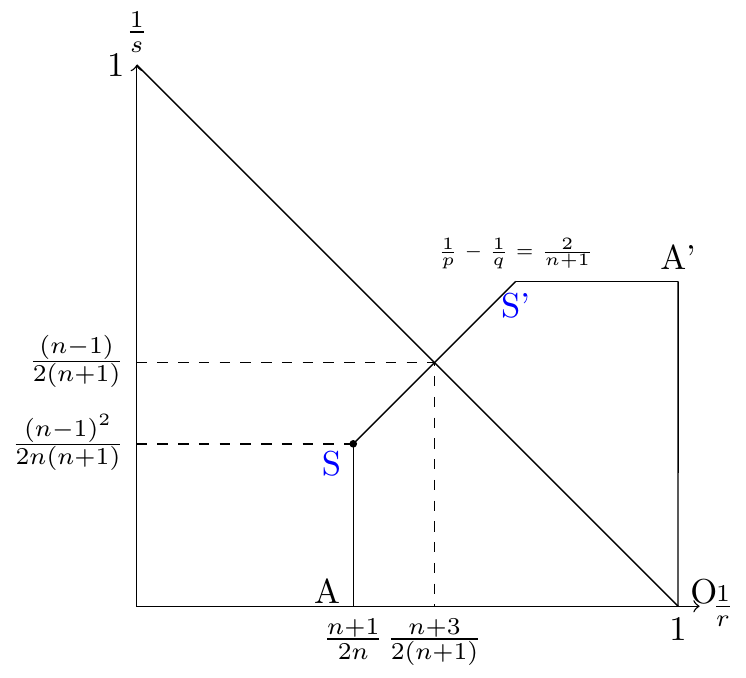}\label{fig1}
\caption{$L^p-L^q$ estimates}
\end{figure}

{\bf Case 2: $\alpha$ is not a integer.} We shall first prove the particular situation $\frac{n}{n+1}\leq \alpha<1$, then show
that the general case can be obtained after a slight modification.

To simplify matters we point out here that we only need to prove estimates \eqref{2.2} for $z=e^{i\theta}, 0<|\theta|<\frac{\pi\alpha}{2}$,
since there exists a constant depending on $\alpha$ such that
$$
\|(-\Delta)^{\alpha}(e^{i\theta}-(-\Delta)^{\alpha})^{-1}\|_{L^1-L^1}\leq C_{\alpha},\quad  \frac{\pi\alpha}{2}\leq |\theta|\leq \pi,
$$
which follows from the fact that the semigroup $e^{-t(-\Delta)^{\alpha}}$ can be extended to an analytic semigroup on $L^1$ in the region $C=\{z:\Re z>0\}$
 and its operator norm is uniformly bounded in every closed subsector of $C$ (see e.g. Komatsu \cite{K}). Hence we are allowed to  write $z=s^{\alpha}$, where $0<|\arg s|<\pi$. We recall the following expression of the resolvent of the fractional power of the negative Laplacian (see Martinez, Sanz \cite{M} )
\begin{align}\label{2.8}
((-\Delta)^{\alpha}-s^{\alpha})^{-1}&=\frac{s^{1-\alpha}}{\alpha}(-\Delta-s)^{-1} \nonumber \\
&+\frac{\sin{\alpha\pi}}{\pi}\int_{0}^{\infty}\frac{\lambda^{\alpha}(\lambda-\Delta)^{-1}}
{\lambda^{2\alpha}-2\lambda^{\alpha}s^{\alpha}\cos{\alpha\pi}+s^{2\alpha}}\, d\lambda \nonumber\\
&\triangleq R_1+R_2.
\end{align}
And we break the kernel of $R_j$ in the same way of $K$  such that $R_j(x)=K_j'(x)+K_j''(x)$. Hence we have $K'=K_1'+K_2'$ and $K''=K_1''+K_2''$.

First, it's easy to see that $K_1'$ is the "good term" for estimating $K'$. Indeed, from \eqref{2.2}, we have
\begin{equation}\label{2.9}
|K_1'(x)|\leq C|x|^{2-n}\leq C|x|^{2\alpha-n}.
\end{equation}
To estimate $K_2'$, we recall that (see Stein \cite[p. 132]{S})
\begin{equation}\label{2.10}
(\lambda-\Delta)^{-1}(x,y)=C\int_{0}^{\infty}e^{-\delta\lambda-\frac{|x-y|^2}{4\pi\delta}}\delta^{\frac{-n+2}{2}}\, \frac{d\delta}{\delta},\quad \lambda>0.
\end{equation}
And a direct computation yields
\begin{align*}
&\int_{0}^{\infty}\frac{\lambda^{\alpha}(\lambda-\Delta)^{-1}(x,y)}{\lambda^{2\alpha}-2\lambda^{\alpha}s^{\alpha}\cos{\alpha\pi}+s^{2\alpha}}\, d\lambda\\
&=\int_{0}^{\infty}e^{-\frac{|x-y|^2}{4\pi\delta}}\delta^{\frac{-n+2}{2}}\, \frac{d\delta}{\delta}\int_{0}^{\infty}\frac{\lambda^{\alpha}e^{-\delta\lambda}}{\lambda^{2\alpha}-2\lambda^{\alpha}s^{\alpha}\cos{\alpha\pi}+s^{2\alpha}}\, d\lambda\\
&=\int_{0}^{\infty}e^{-\frac{|x-y|^2}{4\pi\delta}}\delta^{\frac{-n+2\alpha}{2}}f_{\alpha}(\delta)\, \frac{d\delta}{\delta}\\
&=C|x-y|^{2\alpha-n}+o(|x-y|^{2\alpha-n}), \quad |x-y|\to 0,
\end{align*}
where $f_{\alpha}(\delta)=\int_{0}^{\infty}\frac{t^{\alpha}e^{-t}}{t^{2\alpha}-2\delta^{\alpha}t^{\alpha}s^{\alpha}\cos{\alpha\pi}+(\delta s)^{2\alpha}}\, dt$, and $\lim_{\delta\to 0}f_{\alpha}(\delta)=\Gamma(1-\alpha)$. The last equality follows by observing that
$$
|x|^{-n+2\alpha}=\frac{(4\pi)^{\frac{-n+2\alpha}{2}}}{\Gamma(\frac{n-2\alpha}{2})}\int_{0}^{\infty}e^{-\frac{|x|^2}{4\pi\delta}}\delta^{\frac{-n+2\alpha}{2}}\, \frac{d\delta}{\delta}.
$$
Thus we have
\begin{equation}\label{2.11}
|K_2'(x)|\leq  C|x|^{2\alpha-n}.
\end{equation}
From \eqref{2.11} and \eqref{2.11}, we obtain estimates \eqref{2.6} with $\frac1p-\frac1q=\frac{2\alpha}{n}$.

Next, we estimate the term $K''$. Since, as indicated before, $K_1''$ satisfies estimates \eqref{2.7} with the same exponents there, it suffices to consider convolution with the kernel $K_2''$, which we shall see is the "good term" in this
case. In fact, we recall that \eqref{2.10} implies that for $\lambda>0$

\begin{equation}\label{2.12}
|(\lambda-\Delta)^{-1}(x,y)|\leq
\left\{\begin{array}{cl}
C\lambda^{\frac{n-2}{4}}|x-y|^{-\frac{n-2}{2}}e^{-\sqrt{\lambda}|x-y|}, \, \sqrt{\lambda}|x-y|>1,\\
C|x-y|^{2-n}, \, \sqrt{\lambda}|x-y|\leq 1.
\end{array}\right.
\end{equation}

Using this we obtain
\begin{align*}
&\int_{0}^{\infty}|\frac{\lambda^{\alpha}(\lambda-\Delta)^{-1}(x,y)}{\lambda^{\alpha}-s^{\alpha}e^{\pm i\pi\alpha}}|\, d\lambda\\
&\leq C|x-y|^{2-n}\int_{0}^{|x-y|^{-2}}|\frac{\lambda^{\alpha}}{\lambda^{\alpha}-s^{\alpha}e^{\pm i\pi\alpha}}|\, d\lambda\\ &+C|x-y|^{-n}\int_{1}^{\infty}\frac{t^{2\alpha+\frac n2}e^{-t}}{|t^{2\alpha}-|x-y|^{2\alpha}s^{\alpha}e^{\pm i\pi\alpha}|}\, dt\\
&\leq C|x-y|^{-n-2\alpha}, \quad \text{if}\,\, |x-y|>1,
\end{align*}
where the last inequality follows from the fact that when $|\arg s^{\alpha}|\leq \frac{\pi\alpha}{2}$, then
$$
|\lambda^{\alpha}-s^{\alpha}e^{\pm i\pi\alpha}|\ge C_{\alpha}, \quad \text{if}\,\, 0<\lambda<1,
$$
and
$$
|t^{2\alpha}-|x-y|^{2\alpha}s^{\alpha}e^{\pm i\pi\alpha}|\geq C_{\alpha}|x-y|^{2\alpha},\quad \text{if}\,\, |x-y|>1, \, t>0.
$$
Hence, Young's inequality gives
\begin{equation}\label{2.13}
\|K''*f\|_{L^q(\mathbb{R}^n)}\leq C\|f\|_{L^p(\mathbb{R}^n)}, \quad 1\leq p< q\leq \infty.
\end{equation}
So $K''$ satisfies estimate \eqref{2.9} with the same exponents there, which prove the case $\alpha<1$.

We are left to show the remaining case where $m<\alpha<m+1, m=1,2,\ldots$, the idea is the same so we just sketch arguments. Instead of using formula \eqref{2.10}, we shall use the following functional formula:
\begin{align}
((-\Delta)^{\alpha}-s^{\alpha_m})^{-1}f &=\frac{s^{1-\alpha_m}}{\alpha_m}((-\Delta)^{m+1}-s)^{-1} \nonumber \\
&+\frac{\sin{\alpha_m\pi}}{\pi}\int_{0}^{\infty}\frac{\lambda^{\alpha_m}(\lambda+(-\Delta)^{m+1})^{-1}f}
{\lambda^{2\alpha_m}-2\lambda^{\alpha_m}s^{\alpha_m}\cos{\alpha_m\pi}+s^{2\alpha_m}}\, d\lambda \tag{3.8'}
\end{align}
here $\alpha_m=\frac{\alpha}{m+1}<1$. When estimating $K'$, we shall replace \eqref{2.10} by
$$
(\lambda+(-\Delta)^{m+1})^{-1}(x,y)=\int_{0}^{\infty}e^{-t}t^{-\frac{n}{2m+2}}F(\frac{|x-y|}{t^{{1}/{2(m+1)}}})\, dt,
$$
where $F(\cdot)\in L^p(\mathbb{R})$, $1\leq p\leq \infty$. It's then easy to see that estimate \eqref{2.8} is valid with $\frac1p-\frac1q=\frac{2\alpha}{n}$.
In order to  estimate $K''$, we shall replace \eqref{2.12} by the following estimate
\begin{align*}
&|(\lambda+(-\Delta)^{m+1})^{-1}(x,y)|
\leq \\
&\left\{\begin{array}{cl}
C\lambda^{\sigma_1(m,n)}|x-y|^{\sigma_2(m,n)}\exp\{-\lambda^{1/2(2m+1)}|x-y|^{(m+1)/(2m+1)}\}, \, \lambda^{1/2m}|x-y|>1,\\
C|x-y|^{2m-n}, \, \lambda^{1/2m}|x-y|\leq 1,
\end{array}\right.
\end{align*}
where $\sigma_1(m,n)=\frac{(4m+1)n+2(m+1)}{4(m+1)(2m+1)}-1, \sigma_2(m,n)=\frac{2m+2-n}{2(2m+1)}$.
This in turn follows from the fact that the heat kernel of the $e^{-t(-\Delta)^{m+1}}$ satisfies
$$
|e^{-t(-\Delta)^{m+1}}(x,y)|\leq Ct^{\frac{n}{2(m+1)}}\exp\left\{-C\frac{|x-y|^{\frac{2(m+1)}{2m+1}}}{t^{\frac 1{2m+1}}}\right\}.
$$
Now the desired estimates for $K''$ follows from the integer case ($\alpha=m+1$) and the above pointwise estimates of the resolvent of $(-\Delta)^{m+1}$.
 $\hfill{} \Box$
\begin{remark}\label{rmk1.1}
We note that recently Cuenin \cite{C}  established a type of $L^p-L^{p'}$ ($\frac1p+\frac{1}{p'}=1$) uniform resolvent estimates for fractional Laplacian in order to study the eigenvalue bounds for $H=(-\Delta)^{\alpha}+V$. However, our approach is complete different from the one in \cite{C}.
\end{remark}

We point it out here that it follows from the proof of Theorem \ref{thm1.7} that we also have the following $L^p-L^{p'}$ estimates, which generalize \eqref{1.1} to the fractional case.
\vskip0.3cm
\begin{corollary}\label{prop2.1}
Let $n\geq 3$, and $\alpha\ge \frac{n}{n+1}$, suppose $\frac{2}{n+1}\leq \frac 1p-\frac 1{p'}\leq\frac{2\alpha}{n}$, then there is a uniform constant $C>0$, such that
\begin{align}\label{2.14}
\|((-\Delta)^{\alpha}-z)^{-1}\|_{L^{p}-L^{p'}}\leq C|z|^{\frac{n}{2\alpha}(\frac1p-\frac{1}{p'})-1},\, z\in \mathbb{C}\setminus\{0\}.
\end{align}
\end{corollary}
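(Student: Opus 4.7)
The plan is to combine a scaling reduction with the kernel decomposition already carried out inside the proof of Theorem \ref{thm1.7}, and then to interpolate. The first step is to reduce to $|z|=1$ by dilation: setting $v(y)=u(\mu y)$ with $\mu = |z|^{-1/(2\alpha)}$, one has $(-\Delta)^{\alpha}v(y) = \mu^{2\alpha}((-\Delta)^{\alpha}u)(\mu y)$, so that $((-\Delta)^{\alpha}-z)u = g$ transforms into $((-\Delta)^{\alpha}-z_{0})v = h$ with $z_{0}=z/|z|$ and $h(y)=\mu^{2\alpha}g(\mu y)$. A direct comparison of the $L^{p}$ and $L^{p'}$ norms of $(u,g)$ versus $(v,h)$ extracts precisely the scaling factor $|z|^{(n/(2\alpha))(1/p - 1/p')-1}$, so it suffices to establish a uniform $L^{p}\to L^{p'}$ bound for $((-\Delta)^{\alpha}-z_{0})^{-1}$ over all $|z_{0}|=1$.

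\textbf{Endpoints at $|z_{0}|=1$.} I would treat the two endpoints of the range $2/(n+1) \le 1/p - 1/p' \le 2\alpha/n$ separately. At the upper endpoint $1/p - 1/p' = 2\alpha/n$, take $q = p'$ in Theorem \ref{thm1.7}: the gap condition is automatic, and $\min(|1/p - 1/2|, |1/p' - 1/2|) = \alpha/n$ exceeds $1/(2n)$ because $\alpha \ge n/(n+1) > 1/2$ for $n \ge 3$, so Theorem \ref{thm1.7} furnishes the desired uniform bound. At the lower endpoint $1/p - 1/p' = 2/(n+1)$, i.e.\ the Stein-Tomas dual pair $(p,p')=(2(n+1)/(n+3),\, 2(n+1)/(n-1))$, Theorem \ref{thm1.7} lies off the gap line whenever $\alpha > n/(n+1)$ and cannot be invoked as a black box. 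I would instead reopen the kernel decomposition $K = K' + K''$ of the proof of Theorem \ref{thm1.7}: the local part $K'$ satisfies $|K'(x)| \lesssim |x|^{2\alpha-n}$ on $|x|\le 1$, and since $2/(n+1) \le 2\alpha/n$ throughout, Young's inequality (or Hardy-Littlewood-Sobolev in the boundary case $\alpha = n/(n+1)$) controls convolution with $K'$ on $L^{p}\to L^{p'}$; the far part $K''$ is in the scope of Stein's oscillatory integral theorem (\cite[Lemma 2.4]{KRS}), which delivers an $L^{p}\to L^{p'}$ bound on the whole open segment $SS'$ (the Stein-Tomas dual line) appearing in the pentagon.

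\textbf{Interpolation and main obstacle.} With uniform $L^{p}\to L^{p'}$ bounds in hand at the two endpoints $1/p - 1/p' \in \{2/(n+1),\, 2\alpha/n\}$ for all $|z_{0}|=1$, Riesz-Thorin interpolation immediately yields the same uniform bound at every intermediate dual pair, since the relation $1/p+1/p'=1$ is preserved along the interpolation. Combined with the scaling reduction this gives \eqref{2.14}. The main subtlety I anticipate is the lower endpoint: since $(1/p,1/p')$ lies off the gap line of Theorem \ref{thm1.7} when $\alpha > n/(n+1)$, one must reach inside the proof of that theorem and exploit the fact that the kernel analysis there already produces estimates on the whole Stein-Tomas dual segment $SS'$, not only on the gap line. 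The scaling and interpolation arguments are otherwise routine bookkeeping.
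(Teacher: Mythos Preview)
Your proposal is correct and matches the paper's own reasoning, which simply asserts that the corollary ``follows from the proof of Theorem~\ref{thm1.7}'' without further detail; you have correctly identified that one must reopen the kernel decomposition $K=K'+K''$ (since the lower endpoint lies off the gap line of Theorem~\ref{thm1.7}), and your treatment of $K'$ via Young/HLS and $K''$ via \cite[Lemma~2.4]{KRS} on the segment $SS'$ is exactly what the proof of Theorem~\ref{thm1.7} provides. One small remark: the interpolation step is not actually needed, because the kernel bounds already cover every dual pair in the stated range directly ($K'$ is handled for all gaps $\le 2\alpha/n$ and $K''$ for all dual points on $SS'$ and inside the pentagon, which includes every gap $\ge 2/(n+1)$), but this does not affect the validity of your argument.
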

\vskip0.3cm
\subsection{Applications to perturbed fractional Schr\"{o}dinger operator.}

First, we give a simple application to obtain uniform resolvent estimates when the $L^p$ norm of the potential is small.

\begin{proposition}\label{prop3.1}
Assume $\frac{2n}{n+1}\leq 2\alpha<n$, $V\in L^{\frac{n}{2\alpha}}(\mathbb{R}^n)$, and let $H=(-\Delta)^{\alpha}+V$. There exists a constant $c_0>0$ such that
if $\|V\|_{L^{\frac{n}{2\alpha}}}\leq c_0$, then
$$
\|(H-z)^{-1}\|_{L^p-L^{p'}}\leq C |z|^{\frac{n}{2\alpha}(\frac1p-\frac{1}{p'})-1},\,\,\, z\in \mathbb{C}\setminus\{0\}
$$
for $\max(\frac{2\alpha}{n}, \frac{n+3}{2(n+1)})<\frac1p\leq\frac{n+2\alpha}{2n}$.
\end{proposition}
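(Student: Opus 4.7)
The plan is a standard perturbation argument based on the resolvent identity
\[
(H-z)^{-1} = R_0(z)\bigl(I + V R_0(z)\bigr)^{-1}, \qquad R_0(z) := ((-\Delta)^{\alpha} - z)^{-1}.
\]
Corollary \ref{prop2.1} already supplies the correct $L^p$--$L^{p'}$ bound for $R_0(z)$ throughout the stated range (the condition $\tfrac{n+3}{2(n+1)} \le \tfrac1p$ corresponds exactly to the admissibility $\tfrac1p - \tfrac1{p'} \ge \tfrac{2}{n+1}$, while $\tfrac1p \le \tfrac{n+2\alpha}{2n}$ corresponds to the gap $\tfrac1p - \tfrac1{p'} \le \tfrac{2\alpha}{n}$). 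Thus the only thing to produce is a uniform bound on $(I + VR_0(z))^{-1}$ as an operator on $L^p$, which I will obtain by a Neumann series once $\|VR_0(z)\|_{L^p \to L^p}$ is shown to be at most $1/2$.

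To bound $VR_0(z)$ on $L^p$, the idea is to factor through the auxiliary space $L^r$ with $\tfrac1r = \tfrac1p - \tfrac{2\alpha}{n}$. H\"older's inequality gives
\[
\|Vg\|_{L^p} \le \|V\|_{L^{n/(2\alpha)}}\,\|g\|_{L^r},
\]
while the pair $(p,r)$ sits precisely on the Sobolev line $\tfrac1p - \tfrac1r = \tfrac{2\alpha}{n}$, so Theorem \ref{thm1.7} yields the uniform Sobolev estimate $\|R_0(z)f\|_{L^r} \le C\|f\|_{L^p}$ as soon as the gap condition $\min(|\tfrac1p - \tfrac12|,|\tfrac1r - \tfrac12|) > \tfrac{1}{2n}$ is satisfied. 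Composing these two bounds produces $\|VR_0(z)\|_{L^p \to L^p} \le C\|V\|_{L^{n/(2\alpha)}}$, uniformly in $z$, which falls below $1/2$ once $c_0$ is chosen sufficiently small. Combined with Corollary \ref{prop2.1}, this immediately gives the claimed estimate with constant $2C$.

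The main technical point to verify is that the range $\max(\tfrac{2\alpha}{n}, \tfrac{n+3}{2(n+1)}) < \tfrac1p \le \tfrac{n+2\alpha}{2n}$ in the hypothesis is exactly what makes all three invocations legitimate: the strict lower bound $\tfrac1p > \tfrac{2\alpha}{n}$ ensures $r < \infty$ so that Theorem \ref{thm1.7} can even be stated; the lower bound $\tfrac{n+3}{2(n+1)} < \tfrac1p$ places $p$ above the Stein--Tomas threshold required by Corollary \ref{prop2.1}; and the gap condition for Theorem \ref{thm1.7} follows from $\tfrac1p - \tfrac12 > \tfrac{1}{n+1} > \tfrac{1}{2n}$ and from $\tfrac1r \le \tfrac12 - \tfrac{\alpha}{n} < \tfrac12 - \tfrac{1}{2n}$, where the last step uses the standing hypothesis $\alpha \ge \tfrac{n}{n+1} > \tfrac12$. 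Once these arithmetic checks are carried out, no further obstacle remains; the proof is then essentially a one-line combination of the resolvent identity, Corollary \ref{prop2.1}, and a Neumann series estimate driven by Theorem \ref{thm1.7}.
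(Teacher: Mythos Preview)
Your proposal is correct and follows essentially the same approach as the paper: both arguments use Theorem \ref{thm1.7} on the pair $(p,q)$ with $\tfrac1p-\tfrac1q=\tfrac{2\alpha}{n}$ together with H\"older's inequality to make $\|VR_0(z)\|_{L^p\to L^p}\le C\|V\|_{L^{n/(2\alpha)}}$ uniformly in $z$, invert $I+VR_0(z)$ by a Neumann series once $c_0$ is small, and then conclude via the resolvent identity and Corollary \ref{prop2.1}. Your write-up is in fact more explicit than the paper's in checking that the hypothesis on $\tfrac1p$ forces the pair $(p,r)$ to satisfy the admissibility condition \eqref{1.10}.
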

\begin{proof}
Note that we can choose $q$ such that $(\frac1p, \frac1q)$ satisfies \eqref{1.10}. So we apply Theorem \ref{thm1.7} and H\"{o}lder's inequality to obtain
$$
\|V((-\Delta)^{\alpha}-z)^{-1}\|_{L^{p}-L^{p}}\leq C\|V\|_{L^{\frac{n}{2\alpha}}},
$$
then one can choose $c_0=\frac{1}{2C}$ to deduce
\begin{align}\label{3.1}
\sup_{|z|>0}\|(I+V((-\Delta)^{\alpha}-z)^{-1})^{-1}\|_{L^{p'}-L^{p'}}\leq 2
\end{align}
Now the Proposition follows by combining \eqref{3.1} and \eqref{2.14}.
\end{proof}
\vskip0.3cm
\begin{proposition}\label{prop3.2}
Let $0<2m<n$, $m\in \mathbb{Z}$. Assume $V\in L^{\frac{n}{2m}}$ such that $\max\{\frac{n+1}{2n}, \frac{2m}{n}\}<\frac{2m}{n}+\frac{1}{p_0}<1$ for some $p_0>\frac{2n}{n-1}$. Consider the
map $A(\lambda)=(\lambda-(-\Delta)^m)^{-1}V$, then we have the following

(\romannumeral 1) $A(\lambda)$ is a compact operator on $L^{p_0}(\mathbb{R}^n)$ for any $\lambda\in \mathbb{C}$;\\

(\romannumeral 2) $A(\lambda)$ is continuous from $\mathbb{C_+}\setminus \{0\}$ to the space of bounded operators on $L^{p_0}(\mathbb{R}^n)$;\\

(\romannumeral 3)There exists some $E\subset \mathbb{R}$ with Lebesgue measure zero such that for any compact subinterval $K\subset \mathbb{R}\setminus E\setminus \{0\}$, we have
\begin{align}\label{3.2}
\sup_{\lambda\in K, 0<\epsilon<1 }\|((-\Delta)^m+V-\lambda-i\epsilon)^{-1}\|_{L^{p_0'}-L^{p_0}}<\infty,
\end{align}
where  $ \frac{n+3}{2(n+1)}<\frac1{p_0'}\leq\frac{n+2m}{2n}$. In particular, $\sigma_{\text{sing}}((-\Delta)^m+V)\subset E$.
\end{proposition}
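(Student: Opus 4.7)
I would adapt the three-step strategy of the proof of Theorem~\ref{thm1.2} to the polyharmonic setting, using the decomposition \eqref{2.1} to express $(\lambda-(-\Delta)^m)^{-1}$ as a finite sum of Laplace resolvents, Theorem~\ref{thm1.7} (with $\alpha=m$) for uniform Sobolev control, and the analytic Fredholm theorem to locate the exceptional set on $\mathbb{R}_+$.

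\textbf{Parts (i) and (ii).} Set $1/r=1/p_0+2m/n$; the hypothesis places $(1/r,1/p_0)$ inside the range of Theorem~\ref{thm1.7}, so $\|(\lambda-(-\Delta)^m)^{-1}\|_{L^r\to L^{p_0}}\le C$ uniformly in $\lambda\in\mathbb{C}$. Combined with $\|V\cdot\|_{L^{p_0}\to L^r}\le\|V\|_{L^{n/(2m)}}$ from H\"older, this yields boundedness of $A(\lambda)$ on $L^{p_0}$. For $V\in C_c^\infty$, the smoothing of order $2m$ together with the compact support reduce compactness to the Rellich--Kondrachov theorem (as in Lemmas~3.1--3.2 of \cite{GS}); approximating a general $V\in L^{n/(2m)}$ by $C_c^\infty$ functions expresses $A(\lambda)$ as an operator-norm limit of compact operators, proving (i). For (ii), writing
\[
A(\lambda)=-\frac{1}{m\lambda}\sum_{k=0}^{m-1}z_k\,(-\Delta-z_k)^{-1}V,\qquad z_k=\lambda^{1/m}e^{2\pi ik/m},
\]
reduces the continuity of $\lambda\mapsto A(\lambda)$ in operator norm to that of each $(-\Delta-z_k)^{-1}V$ in $z_k$. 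For roots $z_k$ that stay bounded away from $[0,\infty)$ as $\lambda$ varies in a neighborhood of a point of $\mathbb{C}_+\setminus\{0\}$, this is routine from the analyticity of the Laplace resolvent. For the one root $z_0=\lambda^{1/m}$ that may approach the positive real axis as $\mathrm{Im}\,\lambda\to 0^+$, continuity is exactly the content of Step~2 in the proof of Theorem~\ref{thm1.2}: first for bounded compactly supported $V$ using the Bessel-kernel form $R_0(\mu^2)(x,y)=|x-y|^{2-n}F(\mu|x-y|)$ and the pointwise estimates there, then for general $V\in L^{n/(2m)}$ by density, with the approximation error absorbed by the uniform Sobolev bound.

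\textbf{Part (iii).} On the open upper half plane, $A(\lambda)$ is analytic (since the resolvent is analytic off $[0,\infty)$) and compact on $L^{p_0}$ by (i). Splitting $V=V_1+V_2$ with $V_1\in C_c^\infty$ and $\|V_2\|_{L^{n/(2m)}}$ arbitrarily small shows $\|A(\lambda)\|_{L^{p_0}\to L^{p_0}}\to 0$ as $|\mathrm{Im}\,\lambda|\to\infty$, so $(I-A(\lambda))^{-1}$ exists there by Neumann series. The analytic Fredholm theorem then yields a set $E_0$, discrete in $\mathbb{C}_+$, off which $(I-A(\lambda))^{-1}$ is a bounded operator on $L^{p_0}$. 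Define
\[
E=(\overline{E_0}\cap\mathbb{R})\cup\bigl\{\tau\in\mathbb{R}_+:\ker(I-A(\tau+i0))\neq\{0\}\bigr\};
\]
$E$ is closed in $\mathbb{R}$ (by (i) and (ii)), and by standard arguments (Kato smoothness, or bootstrap of boundary eigenfunctions to $L^2$ combined with the self-adjointness of $H$ on $L^2$) it has Lebesgue measure zero. For compact $K\subset\mathbb{R}\setminus(E\cup\{0\})$, continuity from (ii) and compactness of $K$ give
\[
M_K:=\sup_{\lambda\in K,\,0<\epsilon\le 1}\bigl\|(I-A(\lambda+i\epsilon))^{-1}\bigr\|_{L^{p_0}\to L^{p_0}}<\infty,
\]
and the resolvent identity
\[
((-\Delta)^m+V-\lambda-i\epsilon)^{-1}=-(I-A(\lambda+i\epsilon))^{-1}(\lambda+i\epsilon-(-\Delta)^m)^{-1},
\]
together with the uniform $L^{p_0'}\to L^{p_0}$ bound for the free resolvent from Theorem~\ref{thm1.7}, yields \eqref{3.2}. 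The inclusion $\sigma_{\mathrm{sing}}((-\Delta)^m+V)\subset E$ then follows from \eqref{3.2} via Stone's formula by standard arguments.

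\textbf{Main obstacle.} The most delicate step is the continuity statement (ii) at values of $\lambda$ on $\mathbb{R}_+$: reproducing the detailed kernel analysis of Step~2 of the proof of Theorem~\ref{thm1.2} for the one root $z_0$ touching the positive real axis, now in the $L^{p_0}$ setting rather than $L^{2(n+1)/(n-1)}$. Choosing auxiliary Lebesgue exponents to apply Young's and H\"older's inequalities properly, and ensuring uniform control during the $L^{n/(2m)}$-approximation of $V$, is the core technical work; a secondary technical point is justifying the measure-zero claim for $E$ on $\mathbb{R}$ in the absence of discreteness of embedded eigenvalues.
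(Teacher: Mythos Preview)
Your proposal follows essentially the same approach as the paper's proof: boundedness via Theorem~\ref{thm1.7} and H\"older, compactness via Rellich together with approximation of $V$ by compactly supported bounded potentials, continuity via the decomposition \eqref{2.1} reducing to the $m=1$ case handled in Step~2 of the proof of Theorem~\ref{thm1.2}, and then Fredholm theory plus the resolvent identity and \eqref{2.14} for part (iii). The only notable difference is that the paper is terser---for (i) it uses the identity $(1+(-\Delta)^m)A(\lambda)=V+(1-\lambda)A(\lambda)$ and explicit kernel decay at infinity rather than citing \cite{GS}, and for (iii) it simply invokes ``Fredholm theorem'' for the measure-zero exceptional set without the analytic Fredholm elaboration you supply---so your write-up is, if anything, more careful on the point you flag as the secondary obstacle.
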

\vskip0.3cm
\begin{proof}
Note that we can choose $q>1$ such that $(\frac{2m}{n}+\frac{1}{p_0}, \frac1q)$ satisfies \eqref{1.10}, hence Theorem \ref{thm1.7} and our assumption on $V$  imply that $A(\lambda)$ is bounded on $L^{p_0}(\mathbb{R}^n)$ for any $\lambda\in \mathbb{C}$. Without lose of generality, we can assume that $V\in L^{\infty}$ with compact support. Now suppose that $\{f_n\}\rightharpoonup 0$ in $L^{p_0}$. First, we observe that
$$
(1+(-\Delta)^m)A(\lambda)=V+(1-\lambda)A(\lambda),
$$
which yields that $A(\lambda)$ is bounded from $L^{p_0}$ to the Sobolev space $H^{2m, p_0}$, then we can apply Rellich's compactness theorem to choose a subsequence
$f_{n_k}$ such that
\begin{align}\label{3.4}
\chi_{[|x|\leq R]} A(\lambda)f_{n_k}\rightarrow 0, ~~~ \text{in}~~ L^{p_0},
\end{align}
where $\chi$ is the characteristic function. On the other hand, we note that for fixed $\lambda\in \mathbb{C}$, when $|x-y|$ is large enough, we have
\begin{equation*}
|(\lambda-(-\Delta)^m)^{-1}(x,y)|\leq
\left\{\begin{array}{cl}
C|x-y|^{2m-n}, \, \lambda=0,\\
C|x-y|^{-\frac{n-1}{2}}, \, \lambda\ne 0.
\end{array}\right.
\end{equation*}
This in turn implies that
\begin{equation*}
\|\chi_{[|x|\geq R]} A(\lambda)f\|_{L^{p_0}}\leq
\left\{\begin{array}{cl}
CR^{2m-n+\frac{n}{p_0}}\|V\|_{L^{\infty}}\|f\|_{L^{p_0}}, \, \lambda=0,\\
CR^{-\frac{n-1}{2}+\frac{n}{p_0}}\|V\|_{L^{\infty}}\|f\|_{L^{p_0}}, \, \lambda\ne 0.
\end{array}\right.
\end{equation*}
Then for any $\epsilon>0$, our assumption indicates that $p_0>\max\{\frac{2n}{n-1}, \frac{n}{n-2m}\}$, and therefore we are allowed  to choose a large enough constant $R$ such that for all $\|f\|_{L^{p_0}}\leq 1$
\begin{align}\label{3.5}
\chi_{[|x|\geq R]} A(\lambda)f<\epsilon.
\end{align}
Combine \eqref{3.4} and \eqref{3.5} , we prove $(\romannumeral 1)$.

To prove $(\romannumeral 2)$, we may also assume $V\in L^{\infty}$, and $\text{supp}~V\subset \{x\in\mathbb{R}^n, |x|\leq R\}$. Furthermore we note that by \eqref{2.1}, it suffices to prove the case $m=1$, which is contained in the proof of Theorem \ref{thm1.7}.

Having established $(\romannumeral 1)$ and $(\romannumeral 2)$, we apply Fredholm theorem to see that
$$
(I+(\lambda-(-\Delta)^m)^{-1}V))^{-1}: ~~~ L^{p'}\rightarrow L^{p'}
$$
exists on $\mathbb{R}\setminus E\setminus \{0\}$, where $m(E)=0$. Moreover, $(I+(\lambda-(-\Delta)^m)^{-1}V))^{-1}$ is a continuous function of $\lambda\in \mathbb{C}_+\setminus E\setminus \{0\}$. Now the desired estimates \eqref{3.2} follows from resolvent identity and \eqref{2.14}.
\end{proof}

\bigskip

\noindent
{\bf Acknowledgements:} The authors would like to express their sincere gratitude to the reviewing referee for his/her many constructive comments which help us greatly improve the previous version. The second author (X. Yao) would like to thank Avy Soffer for useful discussions about spectrum of Schr\"odinger operator, and also to thank Adam Sikora and Lixin Yan for their cooperations and interests on spectral multipliers. X. Yao was supported by NSFC (Grant No. 11371158), the program for Changjiang Scholars and Innovative Research Team in University (No. IRT13066).

\bigskip


\begin{thebibliography}{00}

\bibitem{A}
S. Agmon, Spectral properties of Schrodinger operators and scattering theory, {\it Ann. Sc. Norm. Sup. Pisa}  {\bf2} (1975), 151-218.

\bibitem{BSSY}
J. Bourgain, P. Shao, C.D. Sogge, X. Yao, On $L^p$ Resolvent Estimates and the Density of Eigenvalues for Compact Riemannian Manifolds, {\it Communications in Mathematical Physics} {\bf333} (2015), 1483-1527.

\bibitem{Blu}
S. Blunck, A H\"ormander-type spectral multiplier theorem for operators without heat kernel, {\it Ann. Sc. Norm. Super.
Pisa Cl. Sci.} {\bf5} (2003) 449-459.

\bibitem{COSY}
P. Chen, E. M. Ouhabaz, A. Sikora, L. Yan, Restriction estimates, sharp spectral multipliers and endpoint estimates for bochner-riesz means, To appear in {\it J. Anal. Math.} arXiv: 1202.4052.

\bibitem{C}
J. C. Cuenin, Eigenvalue bounds for Dirac and fractional schr\"{o}dinger operators with complex potentials, arxiv: 1506.07193.

\bibitem{Da} E. B. Davies, Limits on $L^p$ regularity of self-adjoint elliptic operators, {\it J Differential Equations } {\bf1} (1997), 83-102.

\bibitem{DKS} D. Dos Santos Ferreira, C. Kenig, and M. Salo, On $L^p$ resolvent estimates for Laplace Beltrami operators on compact manifolds, {\it Forum Math.} {\bf26} (2014),  815-849.

\bibitem  {DOS}
 X. T. Duong, E.M. Ouhabaz, A. Sikora, Plancherel-type estimates and sharp spectral multipliers,  {\it J. Funct. Anal. } {\bf196} (2002),  443--485.

\bibitem{F}
R. Frank, Eigenvalue bounds for Schr\"{o}dinger operators with complex potentials, {\it Bull. London Math. Soc. } {\bf43} (2011), 745-750.

\bibitem{F15}
R. Frank, Eigenvalues of Schr\"{o}dinger operators with complex surface potentials, arXiv:1512.09067.

\bibitem{FS}
R. Frank, B. Simon, Eigenvalue bounds for Schr\"{o}dinger operators with complex potentials. II., arxiv: 1504.01144.

\bibitem{G}
M. Goldberg, Dispersive bounds for the three-dimensional Schr\"{o}dinger equation with almost critical potentials, {\it Geometric and Functional Analysis } {\bf16} (2006), 517-536.

\bibitem{GS}
M. Goldberg, W. Schlag, A limiting absorption principle for the three-dimensional Schr\"{o}dinger equation with $L^p$ potentials, {\it Int. Math. Res. Not.} {\bf75} (2004) 4049-4071.

\bibitem{GH}
C. Guillarmou, A. Hassell, Uniform sobolev estimates for non-trapping metrics,  {\it Journal of the Institute of Mathematics of Jussieu } {\bf13}  (2014), 599-632.

\bibitem{Heb}
 W. Hebisch, A multiplier theorem for Schr\"odinger operators. {\it Colloq. Math.} {\bf 60/61}  (1990), 659--664.


\bibitem{HS}
S. Huang, C.D. Sogge, Concerning $L^p$  resolvent estimates on simply connected manifolds of constant curvature, {\it J. Funct. Anal.} {\bf267} (2014), 4635-4666.

\bibitem{IJ}
A. D. Ionescu, D. Jerison, On the absence of positive eigenvalues of Schr\"{o}dinger operators with rough potentials, {\it Geometric and Functional Analysis} {\bf13} (2003), 1029-1081.

\bibitem{JK}
A. Jensen, T. Kato, Spectral properties of Schr\"{o}dinger operators and time-decay of the wave functions, {\it Duke Mathematical Journal} 46 (1979), 583-611.

\bibitem{JSS}
J. L. Journ\'{e}, A. Soffer, C. D. Sogge, Decay estimates for Schr\"{o}dinger operators, {\it Comm. Pure Appl. Math.} {\bf44} (1991), 573-604.


\bibitem{KU}
P.C. Kunstmann, M. Uhl, Spectral multiplier theorems of H\"ormander type on Hardy and Lebesgue spaces, {\it J. Operator Theory} {\bf73} (2015),  27-69.

\bibitem{KRS}
C. Kenig, A. Ruiz, C. D. Sogge, Uniform Sobolev inequalities and unique continuation for second order constant coefficient differential operators, {\it Duke Mathematical Journal} {\bf55} (1987), 329-347.

\bibitem{K}
H. Komatsu, Fractional powers of operators, {\it Pacific J. Math.} {\bf19} (1966), 285-346.

\bibitem{KU}
K. Krupchyk, G. Uhlmann, On $L^p$ resolvent estimates for elliptic operators on compact manifolds, {\it Communications in Partial Differential Equations}  {\bf40} (2015), 438-474.

\bibitem{M}
C. Martinez, M. Sanz, The Theory of Fractional Powers of Operators, North Holland, 2001.

\bibitem{Mi}
H.  Mizutani, Eigenvalue bounds for non-self-adjoint Schr\"{o}dinger operators with the inverse-square potential, arXiv:1607.01727.

\bibitem{RS}
M. Reed, B. Simon, Methods of modern mathematical physics. II. Fourier Analysis Self-adjointness,  Academic Press, New York-London, 1978.

\bibitem{RS04}
I. Rodnianski, W. Schlag, Time decay for solutions of schr\"{o}dinger equations with rough and time-dependent potentials, {\it Inventiones Mathematicae} {\bf155} (2004) (3), 451-513.

\bibitem{SY}
P. Shao, X. Yao,  Uniform Sobolev resolvent estimates for
the Laplace-Beltrami operator on compact manifolds, {\it Inter. Math. Res. Notices} {\bf12}  (2014)  3439-3463.

\bibitem{Shen}
Z. Shen, On absolute continuity of the periodic Schr\"{o}dinger operators, {\it Internat. Math. Res. Notices}
{\bf1} (2001), 1-31.

\bibitem{SYY0}
A. Sikora, L. Yan, X. Yao, Sharp spectral multipliers for operators satisfying generalized gaussian estimates, {\it Journal of Functional Analysis} {\bf266} (2012), 368-409.

\bibitem{SYY}
 A. Sikora, L. Yan, X. Yao, Spectral multipliers, Bochner-Riesz means and uniform Sobolev inequalities
for elliptic operators, arxiv: 1506.04883.

\bibitem{So}
C.D. Sogge, Fourier integrals in classical analysis, Cambridge University Press, 1993.

\bibitem{Sog}
C.D. Sogge, On the convergence of Riesz means on compact manifolds, {\it Ann. of Math.} {\bf126} (1987), no. 2,
439-447.

\bibitem{S}
E. M. Stein, Singular integrals and differentiability properties of functions, Princeton University Press, Princeton,
NJ, 1970.

\bibitem{St}
E. M. Stein, Harmonic Analysis, Real Variable Methods, Orthogonality, and Oscillatory Integrals, Princeton University Press, Princeton, NJ, 1993.

\bibitem{Y}
K. Yajima, The $W^{k,p}$-continuity of wave operators for schr\"{o}dinger operators, {\it J. Math. Soc. Japan,} {\bf47} (1995), 551-581.






\end{thebibliography}
\end{document}